\newtheorem{theorem}{Theorem}
\newtheorem{lemma}{Lemma}
\newtheorem{corollary}{Corollary}
\newtheorem{definition}{Definition}
\newtheorem{conjecture}{Conjecture}
\renewcommand{\phi}{\varphi}
\renewcommand{\P}{\mathbb{P}}
\newcommand{\E}{\mathbb{E}}
\newcommand{\N}{\mathbb{N}}
\newcommand{\cT}{\mathcal{T}}
\def\ds1{\mathds{1}}
\renewcommand{\epsilon}{\varepsilon}
\newcommand\Var{{\dsV\text{ar}}\,}
\newcommand\dsV{\mathbb{V}}
\newlength{\minipagewidth}
\newcommand{\beq}{\begin{equation}}
\newcommand{\eeq}{\end{equation}}
\newcommand{\beqa}{\begin{eqnarray}}
\newcommand{\eeqa}{\end{eqnarray}}
\newcommand{\beqan}{\begin{eqnarray*}}
\newcommand{\eeqan}{\end{eqnarray*}}
\def\ba#1\ea{\begin{align*}#1\end{align*}} %\ba = \begin{algin*}, \ea = \end{align*}
\def\banum#1\eanum{\begin{align}#1\end{align}} %\banum = \begin{algin}, \eanum
\newcommand{\PA}{\mathrm{PA}}
\newcommand{\TV}{\mathrm{TV}}
\newcommand{\Beta}{\mathrm{Beta}}
\newcommand{\GGa}{\mathrm{GGa}}
\newcommand{\Dir}{\mathrm{Dir}}
\newcommand{\BlackBox}{\rule{1.5ex}{1.5ex}}  % end of proof
\newenvironment{proof}{\par\noindent{\bf Proof\ }}{\hfill\BlackBox\\[2mm]}
\DeclareMathOperator{\erf}{erf}
\DeclareMathOperator{\erfc}{erfc}
\begin{document}
\title{On the influence of the seed graph in \\ the preferential attachment model}
\author{
	S{\'e}bastien Bubeck
	\thanks{Princeton University; \texttt{sbubeck@princeton.edu}.}
	\and
	Elchanan Mossel
	\thanks{University of California, Berkeley; \texttt{mossel@stat.berkeley.edu}.}
	\and
	Mikl\'os Z.\ R\'acz
	\thanks{University of California, Berkeley; \texttt{racz@stat.berkeley.edu}.}
}
\date{\today}

\maketitle

\begin{abstract}
We study the influence of the seed graph in the preferential attachment model, focusing on the case of trees. 
We first show that the seed has no effect from a weak local limit point of view. 
On the other hand, we conjecture that different seeds lead to different distributions of limiting trees from a total variation point of view. 
We take a first step in proving this conjecture by showing that seeds with different degree profiles lead to different limiting distributions for the (appropriately normalized) maximum degree, implying that such seeds lead to different (in total variation) limiting trees.
\end{abstract}

\section{Introduction} \label{sec:intro}
We are interested in the following question: suppose we generate a large graph according to the linear preferential attachment model---can we say anything about the initial (seed) graph? A precise answer to this question could lead to new insights for the diverse applications of the preferential attachment model. In this paper we initiate the theoretical study of the seed's influence. 
Experimental evidence of the seed's influence already exists in the literature, see, e.g.,~\cite{schweiger2011generative}.
For sake of simplicity we focus on \emph{trees} grown according to linear preferential attachment.

\medskip

For a tree $T$ denote by $d_T(u)$ the degree of vertex $u$ in $T$, $\Delta(T)$ the maximum degree in $T$, and $\vec{d}(T) \in \N^{\N}$ the vector of degrees arranged by decreasing order.\footnote{We artificially continue the vector of degrees with zeros after the $|T|^{\text{th}}$ coordinate to put all degree profiles on the same space.} We refer to $\vec{d}(T)$ as the degree profile of $T$. For $n \geq k \geq 2$ and a tree $T$ on $k$ vertices we define the random tree $\PA(n, T)$ by induction. First $\PA(k, T)=T$. Then, given  $\PA(n,T)$, $\PA(n+1,T)$ is formed from  $\PA(n,T)$ by adding a new vertex $u$ and a new edge $uv$ where $v$ is selected at random among vertices in $\PA(n,T)$ according to the following probability distribution:
$$\P\left(v = i \ \middle| \, \PA(n, T) \right) = \frac{d_{\PA(n, T)}(i)}{2 \left( n - 1 \right)}.$$
This model was introduced in \cite{Mah92} under the name {\em Random Plane-Oriented Recursive Trees} but we use here the modern terminology of Preferential Attachment graphs, see \cite{BA99, BRST01}. 
In the following we also denote by $S_k$ the $k$-vertex star.

\medskip

We want to understand whether there is a relation between $T$ and $\PA(n, T)$ when $n$ becomes very large. We investigate three ways to make this question more formal. They correspond to three different points of view on the limiting tree obtained by letting $n$ go to infinity. 

\medskip

The least refined point of view is to consider the tree $\PA(\infty, T)$ defined on a countable set of vertices that one obtains by continuing the preferential attachment process indefinitely. As observed in \cite{KK05}, in this case the seed does not have any influence: indeed for any tree $T$, almost surely, $\PA(\infty, T)$ will be the unique isomorphism type of tree with countably many vertices and in which each vertex has infinite degree.  In fact this statement holds for {\em any} model where the degree of each fixed vertex diverges to infinity as the tree grows. For example, this notion of limit does not allow to distinguish between linear and non-linear preferential attachment models (as long as the degree of each fixed node diverges to infinity).

\medskip

Next we consider the much more subtle and fine-grained notion of a weak local limit introduced in \cite{BS01}. 
The notion of graph limits is more powerful than the one considered in the previous paragraph as it can, for example, distinguish between models having different limiting degree distributions. 
The weak local limit of the preferential attachment graph was first studied in the case of trees in~\cite{rudas2007random} using branching process techniques, and then later in general in \cite{BBCS12} using P\'olya urn representations. 
These papers show that $\PA(n, S_2)$ tends to the so-called P{\'o}lya-point graph in the weak local limit sense, and our first theorem utilizes this result to obtain the same for an arbitrary seed:
\begin{theorem} \label{th:weaklimit}
For any tree $T$ the weak local limit of $\PA(n, T)$ is the P{\'o}lya-point graph described in \cite{BBCS12} with $m=1$.
\end{theorem}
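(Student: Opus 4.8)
The plan is to deduce the general case from the known result for the seed $S_2$ by an absolute-continuity (change-of-measure) argument, exploiting that the preferential attachment dynamics depend on the seed only through its first few steps. Recall first what is being proved: saying that $\PA(n,T)$ converges to the P\'olya-point graph in the weak local (Benjamini--Schramm) sense means that for every radius $r$ and every finite rooted tree $(H,o)$, the empirical neighbourhood frequency
\[
\mu_n^T(H,o) := \frac1n \, \#\bigl\{ v \in \PA(n,T) : B_r(v) \cong (H,o) \bigr\}
\]
converges in probability, as $n\to\infty$, to $p_{H,o} := \P\bigl( B_r(\text{P\'olya-point}, o) \cong (H,o)\bigr)$. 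By the results cited in the excerpt this holds for $T = S_2$, so it suffices to transfer this convergence to an arbitrary seed $T$.

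The key observation is an absolute-continuity identity. Fix a tree $T$ on $k$ vertices and choose a \emph{leaf-addition order} of $T$ starting from one of its edges, i.e.\ an edge $e$ together with an ordering of the remaining $k-2$ vertices so that each is a leaf at the moment it is added; such an order exists for every tree. Let $B$ be the event that, running $\PA(\cdot,S_2)$ with $S_2 = e$, the vertices added at steps $3,\dots,k$ attach to exactly the prescribed earlier vertices, so that $\PA(k,S_2)$ equals $T$. Since each of these $k-2$ attachments has a fixed positive probability, $\P_{S_2}(B) =: c_T > 0$ is a constant not depending on $n$. Because the attachment rule from step $k$ onwards depends only on the current degrees, and not on the order in which the seed vertices appeared, the law of the sequence of (unlabelled) trees $\PA(n,S_2)$ conditioned on $B$ coincides, for every $n \ge k$, with the law of $\PA(n,T)$. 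In particular $\P_T$ is absolutely continuous with respect to $\P_{S_2}$ with density bounded by $c_T^{-1}$.

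With the absolute continuity in hand the transfer is immediate, since convergence in probability is preserved under a change of measure with bounded density: for every $\eps>0$,
\[
\P_T\bigl( |\mu_n^T(H,o) - p_{H,o}| > \eps \bigr)
= \P_{S_2}\bigl( |\mu_n(H,o)-p_{H,o}| > \eps \,\big|\, B\bigr)
\le c_T^{-1}\, \P_{S_2}\bigl( |\mu_n(H,o)-p_{H,o}| > \eps \bigr) \xrightarrow[n\to\infty]{} 0,
\]
invoking the base case for $S_2$ in the last step. As this holds for every $r$ and every $(H,o)$, the weak local limit of $\PA(n,T)$ is the P\'olya-point graph with $m=1$.

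The main obstacle is the careful justification of the absolute-continuity identity: one must exhibit a leaf-addition order realizing $T$ from a single edge, verify that the conditioned process agrees in law with $\PA(n,T)$ \emph{for all} $n$ (here the fact that the dynamics ignore the seed vertices' relative ages is what makes the identification legitimate), and confirm that the cited result for $S_2$ is available in the convergence-in-probability form that the transfer requires rather than merely in expectation. If one prefers to avoid conditioning on a $(k-2)$-step event, the same argument can be organized as an induction on the seed size, comparing a $j$-vertex seed with the $(j+1)$-vertex seed obtained by attaching a single leaf; then $B$ is a one-step event and the bounded-density transfer is applied $k-2$ times.
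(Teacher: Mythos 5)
Your route is genuinely different from the paper's. The paper never conditions on the seed: it couples $\PA(n,T)$ with a preferential attachment process started from a forest $F$ of $2(|T|-1)$ isolated self-looped vertices (one vertex per unit of seed degree), takes the convergence of $\PA(\cdot,F)$ to the P\'olya-point graph as given, and then shows that the radius-$r$ ball around a uniform vertex is asymptotically unlikely to contain any vertex of $F$, because every seed vertex has degree tending to infinity while the ball of the P\'olya-point graph has tight degrees. Your alternative is to use the identity $\PA(n,T) \stackrel{d}{=} \left(\PA(n,S_2) \,\middle|\, \PA(|T|,S_2)=T\right)$, which is legitimate --- it is in fact stated and used in Section~\ref{sec:previous_arbitrary} of the paper --- together with a bounded-density transfer. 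The realizability of any seed $T$ by leaf additions from an edge is correct, the constant $c_T>0$ is indeed independent of $n$, and convergence \emph{in probability} does survive conditioning on a positive-probability event, exactly as you compute.

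The genuine issue is the one you flag at the end but do not resolve: your transfer requires the $S_2$ result in quenched form (empirical neighbourhood frequencies $\mu_n$ converge in probability), whereas the definition the paper works with (Definition 2.1 of \cite{BBCS12}) and the theorem of \cite{BBCS12} are annealed statements, i.e. statements about $\P\left(B_r(\PA(n,S_2),k_n) = (H,y)\right)$, which is $\E[\mu_n(H,y)]$. As pure measure theory, annealed convergence does \emph{not} survive conditioning: one can have $\E[\mu_n] \to p$ while $\E[\mu_n \mid B]$ converges to something else, e.g.\ if $\mu_n$ correlates persistently with the first $k$ attachment steps --- and ruling out precisely such correlation with the seed is the content of the theorem, so nothing circular can be salvaged from the annealed statement alone. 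Hence, resting the base case on \cite{BBCS12} as cited leaves a real gap. The gap is fillable for trees: the branching-process approach of \cite{rudas2007random} gives almost sure convergence of empirical counts of local characteristics (an $r$-ball around a vertex of depth at least $r$ is measurable with respect to the descendant subtree of its ancestor $r$ generations up, so Jagers--Nerman theory with random characteristics applies), and once you invoke that result, plus the easy remark that convergence in probability of the bounded variables $\mu_n$ implies the annealed Definition 2.1, your proof closes and in fact yields a conclusion stronger than the paper's (quenched convergence for every seed). By contrast, the paper's coupling works directly with the weaker annealed definition and needs no second-moment or almost-sure input; that is what its extra combinatorial construction buys.
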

This result says that ``locally'' (in the Benjamini-Schramm sense) the seed has no effect. 
The intuitive reason for this result is that in the preferential attachment model most nodes are far from the seed graph and therefore it is expected that their neighborhoods will not reveal any information about it.

\medskip

Finally, we consider the most refined point of view, which we believe to be the most natural one for this problem as well as the richest one (both mathematically and in terms of insights for potential applications). 
First we rephrase our main question in the terminology of hypothesis testing. 
Given two potential seed trees $T$ and $S$, and an observation $R$ which is a tree on $n$ vertices, one wishes to test whether $R \sim \PA(n, T)$ or $R \sim \PA(n, S)$. Our original question then boils down to whether one can design a test with asymptotically (in $n$) non-negligible power. This is equivalent to studying the total variation distance between $\PA(n, T)$ and $\PA(n, S)$. Thus we naturally define 
$$\delta(S, T) = \lim_{n \to \infty} \mathrm{TV}(\PA(n, S), \PA(n, T)),$$
where $\TV$ denotes the total variation distance.\footnote{Observe that $\mathrm{TV}(\PA(n, S), \PA(n, T))$ is non-increasing in $n$ (since one can simulate the future evolution of the process) and always nonnegative so the limit is well-defined.} One can propose a test with asymptotically non-negligible power (i.e., a non-trivial test) iff $\delta(S,T) >0$. 
We believe that in fact this is always the case (except in trivial situations); precisely we make the following conjecture:

\begin{conjecture}\label{conj:main}
 $\delta$ is a metric on isomorphism types of trees with at least 3 vertices.\footnote{Clearly $\delta$ is a pseudometric on isomorphism types of trees with at least $3$ vertices so the only non-trivial part of the statement is that $\delta(S,T) \neq 0$ for $S$ and $T$ non-isomorphic.}
\end{conjecture}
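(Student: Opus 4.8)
\noindent\emph{Reduction to separation.} As the footnote records, $\delta$ is automatically a pseudometric: the defining limit exists and is nonnegative, symmetry and the triangle inequality are inherited from $\TV$ (a genuine metric on laws for each fixed $n$) by passing to the limit, and $\delta(T,T)=0$ is trivial. So the entire content of the conjecture is the \emph{separation} property, $\delta(S,T)>0$ whenever $S\not\cong T$. The restriction to trees on at least three vertices is genuinely needed here: $\PA(3,S_2)$ is deterministically the unique tree on three vertices, so $\PA(n,S_2)$ and $\PA(n,S_3)$ are identically distributed and $\delta(S_2,S_3)=0$. Thus the plan is to prove separation among seeds with at least three vertices.

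\medskip
\noindent\emph{A statistic-based lower bound.} For any measurable map $\phi$ on finite trees, the data-processing inequality gives
$$\TV\left(\PA(n,S),\PA(n,T)\right)\ \geq\ \TV\left(\phi(\PA(n,S)),\,\phi(\PA(n,T))\right).$$
Moreover, total variation is lower semicontinuous under weak convergence: since bounded continuous test functions suffice to compute $\TV$ on a Polish space, if $\phi(\PA(n,S))\Rightarrow\mu_S$ and $\phi(\PA(n,T))\Rightarrow\mu_T$ then $\liminf_{n}\TV(\phi(\PA(n,S)),\phi(\PA(n,T)))\geq\TV(\mu_S,\mu_T)$. Combining the two displays, it suffices to exhibit, for each pair $S\not\cong T$, a (suitably normalized) statistic $\phi=\phi_n$ of the $n$-vertex tree whose law converges under the two seeds to distinct limits $\mu_S\neq\mu_T$; then $\delta(S,T)\geq\TV(\mu_S,\mu_T)>0$.

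\medskip
\noindent\emph{The case of distinct degree profiles.} This is the step I would actually carry out (the ``first step'' of the abstract). Taking $\phi_n=\Delta(\cdot)/\sqrt{n}$, one uses the P\'olya urn / branching-process representation behind \cite{rudas2007random,BBCS12} to show that the normalized maximum degree converges in distribution to a limit law that is an explicit functional of the early urn composition, which in turn is a mixture of products of independent $\Beta$ (equivalently $\GGa$) variables whose parameters are exactly the seed degrees $d_1,\dots,d_k$. I would then argue that this limit law is a strictly monotone/injective function of the degree profile $\vec{d}(T)$ (for instance via its tail exponent or a moment that increases with the largest seed degree), so that seeds with different degree profiles give limiting max-degree laws at positive $\TV$ distance. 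By the lower bound above this yields $\delta(S,T)>0$ for all such pairs.

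\medskip
\noindent\emph{The remaining case and the main obstacle.} The real difficulty, and the reason this stays a conjecture, is to separate non-isomorphic seeds with \emph{identical} degree profiles (the smallest such pairs already occur among small trees). Here $\Delta/\sqrt{n}$, and indeed any statistic depending only on the degree profile, is useless, so one must encode adjacency structure. The natural candidate $\phi_n$ is the isomorphism type of the subtree spanned by the top-$k$ vertices ordered by degree, \emph{decorated} with their normalized degrees $\Delta_i/\sqrt{n}$: one expects the seed and near-seed vertices to remain the high-degree vertices and to retain their mutual adjacencies, so that this decorated skeleton has a limit law carrying information about the seed's shape, not merely its degrees. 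The main obstacle is an identifiability problem: one must (i) establish the joint weak limit of the normalized early degrees together with their spanning structure, precisely enough via the urn representation, and (ii) prove that this joint limit genuinely distinguishes every pair of non-isomorphic seeds, ruling out accidental coincidences of the joint laws for structurally different but degree-equivalent trees. It is exactly step (ii)—which the degree-only analysis cannot touch—that I expect to be the crux, and where the gap left open by the conjecture resides.
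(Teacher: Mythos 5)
You should be aware first of all that the statement you attempted is an open conjecture: the paper does not prove it. What the paper actually establishes is (i) separation, $\delta(S,T)>0$, whenever $\vec{d}(S)\neq\vec{d}(T)$ (Theorem~\ref{th:diff_size_deg}), and (ii) a \emph{conditional} proof of Conjecture~\ref{conj:main} assuming an unproven tail estimate (Conjecture~\ref{conj:technical}) for the $T$-maximum degree $\Delta_T(\PA(n,S))$, where $\Delta_U(\cdot)$ is the maximum of $\sum_{u\in V(U)}d_{\cdot}(\varphi(u))$ over injective graph homomorphisms $\varphi$ from $U$. Your overall skeleton --- reduce to separation, lower-bound $\delta$ by the total variation distance between limit laws of a normalized statistic, settle the distinct-degree-profile case via the normalized maximum degree, and isolate the equal-degree-profile case as the open core --- is exactly the paper's architecture, and your candid admission that step (ii) of your last paragraph is unproven means you have not closed the gap either; that is as expected, since the paper leaves it open too.

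Two points of substance. First, your treatment of the distinct-degree-profile case is too coarse as stated: you propose to distinguish the limit laws of $\Delta(\PA(n,\cdot))/\sqrt{n}$ ``via the tail exponent or a moment that increases with the largest seed degree.'' By Corollary~\ref{cor:max_tail} the leading tail asymptotics is $m\cdot c(|T|,\Delta(T))\,t^{1-2|T|+2\Delta(T)}\exp(-t^2/4)$, which depends only on $|T|$, $\Delta(T)$, and the multiplicity $m$ of the maximum degree; two same-size trees with the same maximum degree and the same multiplicity but profiles differing further down (e.g., in the third-largest degree) have \emph{identical} leading tails, so no tail exponent or leading constant separates them. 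The paper's Case~3 requires a genuinely finer argument: couple $\PA(n,S)$ and $\PA(n,T)$ so that the degrees of vertices $1,\dots,z-1$ (where $z$ is the first index at which the profiles differ) agree for all $n$, subtract those contributions, and compare \emph{sub-leading} tail terms via Lemma~\ref{lem:that_we_need}; your sketch contains no substitute for this step. Second, for the equal-profile case, your decorated ``top-$k$ degree skeleton'' has definitional problems before one even reaches identifiability: the $k$ largest-degree vertices of $\PA(n,T)$ need not induce a connected subgraph, and the seed vertices remain the top-degree vertices only with positive probability, not almost surely, so even your step (i) (existence of the joint limit of structure plus normalized degrees) needs care. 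The paper's candidate $\Delta_T$ is engineered so that one side of the required separation is already provable: taking $\varphi=\id$ gives $\liminf_n\P\left(\Delta_T(\PA(n,T))>t\sqrt{n}\right)\geq\P\left(\sum_{i=1}^{|T|}D_i(T)>t\right)\sim c(|T|,2|T|-2)\,t^{2|T|-3}\exp(-t^2/4)$ by Lemma~\ref{lem:that_we_need}, so that everything missing is concentrated in a single upper-tail estimate under the wrong seed, namely Conjecture~\ref{conj:technical}. If you want to push your program, I would recommend recasting your statistic in this homomorphism form, since it converts your vague identifiability step into one concrete inequality.
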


While we have not yet been able to prove this conjecture,  we are able to distinguish trees with different degree profiles.

\begin{theorem}\label{th:diff_size_deg}
Let $S$ and $T$ be two finite trees on at least $3$ vertices. If $\vec{d}(S) \neq \vec{d}(T)$, then $\delta \left( S, T \right) > 0$.
\end{theorem}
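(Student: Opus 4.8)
The plan is to exploit the data-processing inequality for total variation. For any measurable statistic $\phi$ of a tree we have $\TV\!\left(\PA(n,S),\PA(n,T)\right) \geq \TV\!\left(\mathrm{law}(\phi(\PA(n,S))),\, \mathrm{law}(\phi(\PA(n,T)))\right)$, and I would take $\phi(R) = \Delta(R)/\sqrt{n}$, the normalized maximum degree. Since $\TV(\PA(n,S),\PA(n,T))$ is monotone in $n$ and $\delta(S,T)$ is its limit, it suffices to show that the laws $\mu_n^S,\mu_n^T$ of these normalized maximum degrees do not come arbitrarily close in $\TV$. I would deduce this from two ingredients: (i) $\mu_n^T$ converges weakly to an explicit limit law $\mu^T$ determined by $\vec{d}(T)$, and (ii) the lower semicontinuity of $\TV$ under weak convergence (which follows by testing against bounded continuous functions), giving $\delta(S,T) \geq \liminf_n \TV(\mu_n^S,\mu_n^T) \geq \TV(\mu^S,\mu^T)$. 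The whole problem then reduces to proving $\mu^S \neq \mu^T$ whenever $\vec{d}(S)\neq\vec{d}(T)$.

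To produce the limit law I would use a Pólya urn representation. Writing $v_1,\dots,v_k$ for the seed vertices with degrees $d_1,\dots,d_k$, I would track their degrees together with the aggregated degree of all later-added vertices as a generalized (triangular) Pólya urn; standard urn limit theory then yields the almost sure convergence $d_{\PA(n,T)}(v_i)/\sqrt{n}\to\xi_i$, with an explicit joint law determined by $(d_1,\dots,d_k)$. Each $\xi_i$ has a chi-type marginal whose shape parameter is governed by $d_i$ and whose scale $\sigma$ depends only on the common normalization (hence only on $k$). I would then argue that $\Delta(\PA(n,T))/\sqrt{n}\to M_T:=\sup_v \xi_v$ almost surely, where the supremum ranges over all vertices ever created and is almost surely attained; the seed vertices, being the oldest, carry the largest scale and therefore dominate the far upper tail of $M_T$, while each later vertex born at time $m>k$ has strictly smaller scale and hence strictly faster Gaussian decay.

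The distinction between $\mu^S$ and $\mu^T$ I would then read off from the upper tail of $M_T$. From the chi-type marginals one gets $\P(\xi_i>x)\sim c_i\,x^{d_i-2}e^{-x^2/(2\sigma^2)}$ with a common rate $\sigma^2\asymp 1/k$ shared by all seed vertices, so that
\[
\P(M_T>x) = e^{-x^2/(2\sigma^2)}\Bigl(\sum_i c_i\, x^{d_i-2} + o(1)\Bigr),
\]
the sum running over the seed vertices. The polynomial prefactor at the leading Gaussian rate is $\sum_i c_i x^{d_i-2}$, whose exponents-with-multiplicity are exactly $\{d_i-2\}$ and hence encode the entire profile $\vec{d}(T)$: the top power $\Delta(T)-2$ and its multiplicity come first, then recursively the next distinct degree, and the number of leaves is recovered from the $x^{-1}$ coefficient. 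Thus $\vec{d}(S)\neq\vec{d}(T)$ forces $\mu^S\neq\mu^T$, which closes the argument.

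I expect the main obstacle to be making this tail expansion rigorous. One must show that the tail of the maximum genuinely factorizes as the sum of the single-coordinate tails — i.e.\ that two seed degrees being simultaneously of order $\sqrt{n}$ is negligible at the relevant rate — and one must control the aggregate contribution of the infinitely many later-born vertices to $M_T$, verifying that it sits strictly below the seed's Gaussian rate $1/(2\sigma^2)$. This joint tail analysis of the \emph{dependent} urn limits is the crux; by contrast, the data-processing reduction, the semicontinuity step, and the bookkeeping that recovers the profile from the prefactor are comparatively routine.
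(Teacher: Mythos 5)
Your high-level reduction is exactly the paper's own strategy: data processing down to the normalized maximum degree, passage to the limiting laws, a P\'olya-urn representation of the limiting seed degrees, and reading the degree profile off the upper tail. The genuine gap is that the quantitative tail picture on which everything rests is wrong, and it is wrong precisely where the work lies. With the normalization $d_{\PA(n,T)}(i)/\sqrt{n}\to D_i(T)$ there is no seed-dependent Gaussian rate: for every seed vertex, $\P\left(D_i(T)>t\right)\sim c\left(|T|,d_T(i)\right)t^{2d_T(i)+1-2|T|}e^{-t^2/4}$, and the vertex born at time $m>|T|$ satisfies $\P\left(D_m(T)>t\right)\asymp t^{3-2m}e^{-t^2/4}$; the rate $e^{-t^2/4}$ is universal (independent of $|T|$ and of birth time), and all information sits in the polynomial prefactor. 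So both of your claims --- ``$\sigma^2\asymp 1/k$'' and ``each later vertex has strictly smaller scale and hence strictly faster Gaussian decay'' --- are false. This has three consequences. First, trees of different sizes are not separated by their rates; that case needs a real argument (the paper's Case~2 runs $\PA(\cdot,S)$ up to time $|T|$ and conditions on whether the maximum degree has reached $\Delta(T)$; alternatively, at the level of prefactors, one can use that every tree has a leaf, so the minimal exponents $3-2|S|$ and $3-2|T|$ differ). Second, the infinitely many later-born vertices are not negligible via a rate gap: each contributes at the same rate $e^{-t^2/4}$, and one needs the summable polynomial-order bound $\sum_{i\geq L}\P\left(D_i(T)>t\right)\leq C(L)t^{3-2L}e^{-t^2/4}$ (the paper's Lemma~\ref{lem:that_we_need}(b), proved via Kummer-$U$ estimates), which is genuine work. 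Third, and most seriously, your plan requires an expansion of the tail of the max at the common rate $e^{-x^2/4}$ with polynomial error $o\left(x^{3-2|T|}\right)$ --- fine enough to see the leaves --- whereas the available results (the paper's Lemma~\ref{lem:that_we_need}(a) and Corollary~\ref{cor:max_tail}) give only the leading term, of order $x^{2\Delta(T)+1-2|T|}$. Your ``$+\,o(1)$'' inside the bracket is far too weak for the recovery you describe: with your exponents every term except the top one is itself $o(1)$.

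The paper circumvents the need for a multi-term expansion with a coupling that is missing from your proposal: letting $z$ be the first index at which the ordered degree profiles differ (say $d_S(z)<d_T(z)$), one can couple $\PA(n,S)$ and $\PA(n,T)$ so that the degrees of vertices $1,\dots,z-1$ coincide for all $n$; then $\P\left(\exists i<z:D_i(T)>t\right)=\P\left(\exists i<z:D_i(S)>t\right)$ holds \emph{exactly}, and only a leading-order comparison at index $z$ remains: $\P\left(D_z(T)>t\right)\asymp t^{2d_T(z)+1-2|T|}e^{-t^2/4}$ dominates $\sum_{i\geq z}\P\left(D_i(S)>t\right)=O\left(t^{2d_S(z)+1-2|T|}e^{-t^2/4}\right)$, while the joint-tail terms are $O\left(t^{\alpha}e^{-t^2}\right)$ by $\P\left(D_i>t,D_j>t\right)\leq\P\left(D_i+D_j>2t\right)$ (your instinct that the factorization of the max's tail is the crux is correct, and this is how it is made rigorous). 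I recommend replacing your full-tail-expansion plan by this exact-cancellation coupling, which reduces everything to the leading-order asymptotics that the Beta--generalized-Gamma representation actually delivers.
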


In fact our proof shows a stronger statement, namely that different degree profiles lead to different limiting distributions for the (appropriately normalized) maximum degree.

\medskip

The smallest pair of trees we cannot as of yet distinguish is depicted in Figure~\ref{fig:example}. 
%\psset{unit=1pt}
%\psset{linewidth=0.8pt}
%\begin{figure}[ht]
%  \centering
%    \begin{pspicture*}(300,35)
%      % tree on the left
%      \psline{-}(10,5)(110,5)
%      \psline{-}(35,5)(35,30)
%      \qdisk(10,5){3}
%      \qdisk(35,5){3}
%      \qdisk(60,5){3}
%      \qdisk(85,5){3}
%      \qdisk(110,5){3}
%      \qdisk(35,30){3}
%      \rput{0}(100,25){$S$}
%
%      % tree on the right
%      \psline{-}(190,5)(290,5)
%      \psline{-}(240,5)(240,30)
%      \qdisk(190,5){3}
%      \qdisk(215,5){3}
%      \qdisk(240,5){3}
%      \qdisk(265,5){3}
%      \qdisk(290,5){3}
%      \qdisk(240,30){3}
%      \rput{0}(280,25){$T$}
%
%    \end{pspicture*}
%  \caption{Two trees with six vertices. Is $\delta \left( S , T \right) > 0$?}\label{fig:example}
%\end{figure}

\begin{figure}
\begin{center}
\begin{tikzpicture}[scale=1]
\draw (0,0) -- (1,0) -- (2,0) -- (3,0) -- (4,0);
\draw (1,0) -- (1,1);
\fill (0,0) circle (0.1);
\fill (1,0) circle (0.1);
\fill (2,0) circle (0.1);
\fill (3,0) circle (0.1);
\fill (4,0) circle (0.1);
\fill (1,1) circle (0.1);
\node at (3.5,1) {$S$};
\node at (9.5,1) {$T$};
\draw (6,0) -- (7,0) -- (8,0) -- (9,0) -- (10,0);
\draw (8,0) -- (8,1);
\fill (6,0) circle (0.1);
\fill (7,0) circle (0.1);
\fill (8,0) circle (0.1);
\fill (9,0) circle (0.1);
\fill (10,0) circle (0.1);
\fill (8,1) circle (0.1);
\end{tikzpicture}
\end{center}
\caption{Two trees with six vertices and $\vec{d}(S) = \vec{d}(T)$. Is $\delta \left( S , T \right) > 0$?}\label{fig:example}
\end{figure}
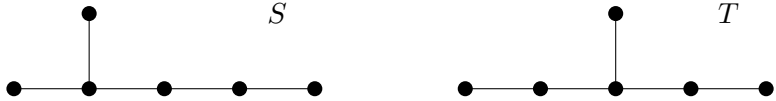

\medskip

In some cases we can say more. For instance, the distance between a fixed tree and a star can be arbitrarily close to $1$ if the star is large enough.

\begin{theorem}\label{th:arbstars}
For any fixed tree $T$ one has
\[
\lim_{k \to \infty} \delta \left( S_{k}, T \right) = 1.
\]
\end{theorem}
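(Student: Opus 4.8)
The plan is to reduce the problem to the behavior of the (normalized) maximum degree and to exploit that, for the star seed $S_k$, this quantity escapes to $+\infty$ as $k\to\infty$. Concretely, for any threshold $C>0$ the event $\{R:\Delta(R)>C\sqrt{n}\}$ is a single test distinguishing the two laws, so for every $n$
\[
\TV\!\left(\PA(n,S_k),\PA(n,T)\right)\ \geq\ \P\!\left(\Delta(\PA(n,S_k))>C\sqrt n\right)-\P\!\left(\Delta(\PA(n,T))>C\sqrt n\right).
\]
The analysis behind Theorem~\ref{th:diff_size_deg} provides, for the fixed seed $T$, convergence in distribution of $\Delta(\PA(n,T))/\sqrt n$ to a proper (a.s.\ finite) random variable $M_T$; hence by the portmanteau lemma $\limsup_n \P(\Delta(\PA(n,T))>C\sqrt n)\leq \P(M_T\geq C)$, which is below any $\eps>0$ once $C$ is large. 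The crux is therefore to show that the first term tends to $1$ as $k\to\infty$.

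For that I would lower bound $\Delta(\PA(n,S_k))$ by the degree $D_n$ of the center of the star, which starts at $D_k=k-1$ and evolves as the P\'olya-urn chain $\P(D_{n+1}=D_n+1\mid \mathcal{F}_n)=D_n/(2(n-1))$. Setting $a_n=\prod_{m=k}^{n-1}(1+\tfrac1{2(m-1)})$, the process $M_n:=D_n/a_n$ is a nonnegative martingale with $M_k=k-1$, and a $\Gamma$-function computation gives $a_n\sim \sqrt n\,\Gamma(k-1)/\Gamma(k-1/2)$. Thus $M_n\to M_\infty^{(k)}$ a.s.\ and $D_n/\sqrt n\to \xi^{(k)}:=M_\infty^{(k)}\,\Gamma(k-1)/\Gamma(k-1/2)$ a.s., with $\E[\xi^{(k)}]=\Gamma(k)/\Gamma(k-1/2)\sim\sqrt k$.

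It then remains to upgrade convergence of the mean to convergence in probability, i.e.\ to prove $\xi^{(k)}\to\infty$ in probability. I would do this by a second-moment estimate: using orthogonality of martingale increments, $\Var(M_\infty^{(k)})=\sum_{n\geq k}\E[(M_{n+1}-M_n)^2]$, and bounding $\E[(M_{n+1}-M_n)^2]$ by $a_{n+1}^{-2}\,\E[D_n]/(2(n-1))$ yields $\Var(M_\infty^{(k)})=O(k)$. Since $(\E M_\infty^{(k)})^2\sim k^2$, Chebyshev gives $M_\infty^{(k)}/(k-1)\to 1$ in probability, so $\xi^{(k)}/\sqrt k$ converges to a positive constant in probability and $\P(\xi^{(k)}>C)\to1$ for every fixed $C$. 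Combining with the display above and letting $n\to\infty$ (via portmanteau at continuity points $C$, and using $\Delta(\PA(n,S_k))\geq D_n$) gives $\delta(S_k,T)\geq \P(\xi^{(k)}>C)-\P(M_T\geq C)$; letting $k\to\infty$ and then $C\to\infty$ yields $\lim_k\delta(S_k,T)=1$.

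I expect the main obstacle to be exactly this concentration step: showing the center's normalized degree genuinely escapes to infinity rather than merely having a diverging mean. The point that makes it work, and that must be made quantitative, is that the large initial degree $k-1$ forces the \emph{relative} fluctuations of the limiting martingale to vanish (the limit behaves like a $\Gamma$ variable of large shape), which is what the $O(k)$ variance bound captures.
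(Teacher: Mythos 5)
Your proposal is correct, and it follows the same basic strategy as the paper---lower bounding $\delta(S_k,T)$ by a threshold test on the normalized maximum degree, showing the star's center degree concentrates at scale $\sqrt{k}$, and showing the term coming from the fixed seed $T$ is negligible---but it implements both ingredients differently. For the star, the paper simply reads off the exact moments from the quoted P\'olya-urn formula~\eqref{eq:mom_Janson_gen}: $\E D_1(S_k)=\Gamma(k)/\Gamma(k-1/2)$ and $\E D_1(S_k)^2=k$, which with Stirling gives $\Var(D_1(S_k))\le 1$, and then applies Chebyshev at the threshold $\sqrt{k}/2$. You instead rebuild the martingale $M_n=D_n/a_n$ from scratch and bound $\Var(M_\infty^{(k)})=O(k)$ by summing increment variances; your computation checks out (note $a_n=\Gamma(k-1)\Gamma(n-1/2)/(\Gamma(k-1/2)\Gamma(n-1))\sim\sqrt{n}\,\Gamma(k-1)/\Gamma(k-1/2)$, and the $L^2$-boundedness you establish also supplies the uniform integrability needed to assert $\E M_\infty^{(k)}=k-1$, a point you should make explicit), and after rescaling it gives $\Var(\xi^{(k)})=O(1)$ versus the paper's exact bound of $1$---weaker constants, but self-contained and sufficient. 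For the seed-$T$ term, the paper invokes the precise tail asymptotics of Corollary~\ref{cor:max_tail} at the $k$-dependent threshold $\sqrt{k}/2$, which lets it take a single limit; you use only that $D_{\max}(T)$ is a proper (a.s.\ finite) random variable together with the portmanteau lemma at a fixed threshold $C$, at the cost of an iterated limit ($k\to\infty$ then $C\to\infty$). Your route is thus more elementary: it needs neither the moment formula~\eqref{eq:mom_Janson_gen} nor the tail estimates of Section~\ref{sec:tail}, only the a.s.\ convergence of normalized degrees, whereas the paper's proof is shorter because it leverages machinery already set up for Theorem~\ref{th:diff_size_deg}.
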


In the next section we derive results on the limiting distribution of the maximum degree $\Delta(\PA(n,T))$ that are useful in proving Theorems~\ref{th:diff_size_deg} and~\ref{th:arbstars}, which we then prove in Section~\ref{sec:proofs}. 
In Section~\ref{sec:candidate} we describe a particular way of generalizing the notion of maximum degree which we believe should provide a way to prove Conjecture~\ref{conj:main}. At present we are missing a technical result which we state separately as Conjecture~\ref{conj:technical} in the same section. 
The proof of Theorem~\ref{th:weaklimit} is in Section~\ref{sec:weaklimit}, while the proof of a key lemma described in Section~\ref{sec:useful} is presented in Section~\ref{sec:lemma_proof}. We conclude the paper with open problems in Section~\ref{sec:op}.

\section{Useful results on the maximum degree} \label{sec:useful}

We first recall several results that describe the limiting degree distributions of preferential attachment graphs (Section~\ref{sec:previous}), and from these we determine the tail behavior of the maximum degree in Section~\ref{sec:tail}, which we then use in the proofs of Theorems~\ref{th:diff_size_deg} and~\ref{th:arbstars}. 
Throughout the paper we label the vertices of $\PA(n,T)$ by $\left\{1,2,\dots,n\right\}$ in the order in which they are added to the graph, with the vertices of the initial tree labeled in decreasing order of degree, i.e., satisfying $d_T (1) \geq d_T (2) \geq \dots \geq d_T \left(\left| T \right| \right)$ (with ties broken arbitrarily). 
We also define the constant
\begin{equation}\label{eq:const}
 c\left(a,b\right) = \frac{\Gamma \left(2a-2\right)}{2^{b-1} \Gamma \left( a - 1/2\right) \Gamma \left( b \right)},
\end{equation}
which will occur multiple times.

\subsection{Previous results}\label{sec:previous}

\subsubsection{Starting from an edge}\label{sec:previous_edge}

\cite{mori2005maximum} used martingale techniques to study the maximum degree of the preferential attachment tree starting from an edge, and showed that $\Delta(\PA(n, S_2))/\sqrt{n}$ converges almost surely to a random variable which we denote by $D_{\max}\left(S_2\right)$. 
He also showed that for each fixed $i \geq 1$, $d_{\PA(n, S_2)}\left(i\right) / \sqrt{n}$ converges almost surely to a random variable which we denote by $D_i\left(S_2\right)$, and furthermore that $D_{\max} \left(S_2\right)= \max_{i \geq 1} D_i \left(S_2\right)$ almost surely. In light of this, in order to understand $D_{\max} \left(S_2 \right)$ it is useful to study $\left\{ D_i \left(S_2\right) \right\}_{i \geq 1}$. 
\cite{mori2005maximum} computes the joint moments of $\left\{ D_i \left(S_2\right)\right\}_{i \geq 1}$; in particular, we have (see~\cite[eq.~(2.4)]{mori2005maximum}) that for $i \geq 2$,
\begin{equation}\label{eq:mom_Janson}
 \E D_i \left( S_2 \right)^r = \frac{\Gamma \left( i - 1 \right) \Gamma \left( 1 + r \right)}{\Gamma \left( i - 1 + \frac{r}{2} \right)}.
\end{equation}

\medskip

Using different methods and slightly different normalization,~\cite{pekoz2013degree} also study the limiting distribution of $d_{\PA(n, S_2)}\left(i\right)$; in particular, they give an explicit expression for the limiting density. Fix $s \geq 1/2$ and define
\[
 \kappa_s \left( x \right) = \Gamma \left( s \right) \sqrt{\frac{2}{s \pi}} \exp \left( - \frac{x^2}{2s} \right) U \left( s - 1, \frac{1}{2}, \frac{x^2}{2s} \right) \mathbf{1}_{\left\{ x > 0 \right\}},
\]
where $U\left( a, b, z \right)$ denotes the confluent hypergeometric function of the second kind, also known as the Kummer U function (see~\cite[Chapter~13]{abramowitz1964handbook}); it can be shown that this is a density function. 
\cite{pekoz2013degree} show that for $i \geq 2$ the distributional limit of 
\[
 d_{\PA(n, S_2)}\left(i\right) / \left( \E d_{\PA(n, S_2)}\left(i\right)^2 \right)^{1/2}
\]
 has density $\kappa_{i-1}$ (they also give rates of convergence to this limit in the Kolmogorov metric).
Let $W_s$ denote a random variable with density $\kappa_s$. The moments of $W_s$ (see~\cite[Section~2]{pekoz2013degree}) are given by 
\begin{equation}\label{eq:mom_Pekoz}
 \E W_s^r = \left( \frac{s}{2} \right)^{r/2} \frac{\Gamma \left( s \right) \Gamma \left( 1 + r \right)}{\Gamma \left( s + \frac{r}{2} \right)},
\end{equation}
and thus comparing~\eqref{eq:mom_Janson} and~\eqref{eq:mom_Pekoz} we see that $D_i \left(S_2 \right) \stackrel{d}{=} \sqrt{2/\left(i-1\right)} W_{i-1}$ for $i \geq 2$.

\subsubsection{Starting from an arbitrary seed graph}\label{sec:previous_arbitrary}

Since we are interested in the effect of the seed graph, we desire similar results for $\PA(n,T)$ for an arbitrary tree $T$. 
One way of viewing $\PA(n,T)$ is to start growing a preferential attachment tree from a single edge and condition on it being $T$ after reaching $\left|T\right|$ vertices; $\PA(n,T)$ has the same distribution as $\PA(n,S_2)$ conditioned on $\PA\left(\left|T\right|,S_2\right) = T$. 
Due to this the almost sure convergence results of~\cite{mori2005maximum} carry over to the setting of an arbitrary seed tree. 
Thus for every fixed $i \geq 1$, $d_{\PA\left(n,T\right)} \left( i \right) / \sqrt{n}$ converges almost surely to a random variable which we denote by $D_i \left(T\right)$, $\Delta \left( \PA \left( n, T \right) \right) / \sqrt{n}$  converges almost surely to a random variable which we denote by $D_{\max} \left( T \right)$, and furthermore $D_{\max} \left( T \right) = \max_{i \geq 1} D_i \left( T \right)$ almost surely.

\medskip

In order to understand these limiting distributions, the basic observation is that for any $i$, $1 \leq i \leq \left|T\right|$, $\left( 2 \left( n - 1 \right) - d_{\PA\left(n,T\right)} \left( i \right), d_{\PA\left(n,T\right)} \left( i \right) \right)$ evolves according to a P\'olya urn with replacement matrix $\left(\begin{smallmatrix} 2 & 0 \\ 1 & 1 \end{smallmatrix}\right)$ starting from $\left(2\left(\left|T\right|-1\right) - d_T \left( i \right), d_T \left( i \right) \right)$. 
Indeed, when a new vertex is added to the tree, either it attaches to vertex $i$, with probability $d_{\PA\left(n,T\right)} \left( i \right) / \left( 2n - 2 \right)$, in which case both $d_{\PA\left(n,T\right)} \left( i \right)$ and $2 \left(n-1\right) - d_{\PA\left(n,T\right)} \left( i \right)$ increase by one, or otherwise it attaches to some other vertex in which case $d_{\PA\left(n,T\right)} \left( i \right)$ does not increase but $2\left(n-1\right) - d_{\PA\left(n,T\right)} \left( i \right)$ increases by two. 
\cite{Jan06} gives limit theorems for triangular P\'olya urns, and also provides information about the limiting distributions; for instance~\cite[Theorem~1.7]{Jan06} gives a formula for the moments of $D_i \left( T\right)$, extending~\eqref{eq:mom_Janson} for arbitrary trees $T$: for every $i$, $1 \leq i \leq \left|T\right|$, we have
\begin{equation}\label{eq:mom_Janson_gen}
 \E D_i \left( T \right)^r = \frac{\Gamma \left( \left|T\right| - 1 \right) \Gamma \left( d_T \left( i \right) + r \right)}{\Gamma \left( d_T \left( i \right) \right) \Gamma \left( \left|T\right| - 1 + \frac{r}{2} \right)},
\end{equation}
and for $i > \left|T\right|$ we have $\E D_i \left( T \right)^r = \Gamma \left( i - 1 \right) \Gamma \left( 1 + r \right) / \Gamma \left( i - 1 + r/2 \right)$, just like in~\eqref{eq:mom_Janson}.

\medskip 

The joint distribution of the limiting degrees in the seed graph, $\left( D_1 \left( T \right), \dots, D_{\left|T \right|} \left( T \right) \right)$, can be understood by viewing the evolution of $\left(d_{\PA\left(n,T\right)} \left( 1 \right), \dots, d_{\PA \left(n,T\right)} \left( \left|T\right| \right) \right)$ in the following way. When adding a new vertex, first decide whether it attaches to one of the initial $\left|T\right|$ vertices (with probability $\sum_{i=1}^{\left|T\right|} d_{\PA \left(n,T\right)} \left( i \right) / \left( 2n - 2 \right)$) or not (with the remaining probability); if it does, then independently pick one of them to attach to with probability proportional to their degrees. In other words, if viewed at times when a new vertex attaches to one of the initial $\left|T\right|$ vertices, the joint degree counts of the initial vertices evolve like a standard P\'olya urn with $\left|T\right|$ colors and identity replacement matrix.

\medskip

Let $\Beta \left(a,b\right)$ denote the beta distribution with parameters $a$ and $b$ (with density proportional to $x^{a-1} (1-x)^{b-1} \mathbf{1}_{\left\{ x \in [0,1]\right\}}$), let $\Dir\left( \alpha_1, \dots \alpha_s \right)$ denote the Dirichlet distribution with density proportional to $x_1^{\alpha_1-1} \cdots x_s^{\alpha_s-1} \mathbf{1}_{\left\{ x \in \left[0,1\right]^s, \sum_{i=1}^s x_i = 1 \right\}}$, and write  $X \sim \GGa \left(a,b\right)$ for a random variable $X$ having the generalized gamma distribution with density proportional to $x^{a-1} e^{-x^b} \mathbf{1}_{\left\{x > 0 \right\}}$. 
On the one hand,  $\left( 2 \left( n - 1 \right) - \sum_{i=1}^{\left|T\right|} d_{\PA \left(n,T\right)} \left( i \right), \sum_{i=1}^{\left|T\right|} d_{\PA \left(n,T\right)} \left( i \right) \right)$ evolves according to a P\'olya urn with replacement matrix $\left(\begin{smallmatrix} 2 & 0 \\ 1 & 1 \end{smallmatrix}\right)$ starting from $\left(0, 2(\left|T\right|-1)\right)$. \cite{Jan06} gives the limiting distribution of $\sum_{i=1}^{\left|T\right|} d_{\PA \left(n,T\right)} \left( i \right) / \sqrt{n}$ (see Theorem~1.8 and Example~3.1): $\sum_{i=1}^{\left|T\right|} D_i \left( T \right) \stackrel{d}{=} 2Z_{\left|T\right|}$, where $Z_{\left|T\right|} \sim \GGa \left( 2\left|T\right| - 1, 2 \right)$. On the other hand, it is known that in a standard P\'olya urn with identity replacement matrix the vector of proportions of each color converges almost surely to a random variable with a Dirichlet distribution with parameters given by the initial counts. These facts, together with the observation in the 
previous 
paragraph, lead to the following representation: if $X$ and $Z_{\left|T\right|}$ are independent, $X \sim \Dir \left(d_T \left( 1 \right), \dots, d_T \left( \left| T \right| \right) \right)$, and $Z_{\left|T\right|} \sim \GGa \left( 2 \left| T \right| - 1, 2 \right)$, then
\begin{equation}\label{eq:rep}
 \left( D_1 \left( T \right), \dots, D_{\left| T \right|} \left( T \right) \right) \stackrel{d}{=} 2 Z_{\left|T\right|} X.
\end{equation}
Recently,~\cite{pekoz2014joint} gave useful representations for $\left(D_1 \left( T \right), \dots, D_r \left( T \right) \right)$ for general $r$, and the representation above appears as a special case (see~\cite[Remark~1.9]{pekoz2014joint}).

\subsection{Tail behavior}\label{sec:tail}

In order to prove Theorem~\ref{th:diff_size_deg} our main tool is to study the tail of the limiting degree distributions. In particular, we use the following key lemma.

\begin{lemma}\label{lem:that_we_need}
Let $T$ be a finite tree.
 \begin{enumerate}[(a)]
  \item Let $U \subseteq \left\{1,2,\dots,\left|T\right| \right\}$ be a nonempty subset of the vertices of $T$, and let $d = \sum_{i \in U} d_T \left( i \right)$. Then 
      \begin{equation}\label{eq:tail_seed}
       \P \left( \sum_{i \in U} D_i \left( T \right) > t \right) \sim c \left( \left| T \right|, d \right) t^{1 - 2\left|T\right| + 2d} \exp \left( - t^2 / 4 \right)
      \end{equation}
  as $t \to \infty$, where the constant $c$ is as in~\eqref{eq:const}.\footnote{Throughout the paper we use standard asymptotic notation; for instance, $f\left( t \right) \sim g\left( t \right)$ as $t\to\infty$ if $\lim_{t\to\infty} f\left( t \right) / g \left( t \right) = 1$.}  
  \item For every $L > \left| T \right|$ there exists a constant $C \left( L \right) < \infty$ such that for every $t \geq 1$ we have
      \begin{equation}\label{eq:tail_late}
	\sum_{i=L}^\infty \P \left( D_i \left( T \right) > t \right) \leq C \left( L \right) t^{3-2L} \exp \left( - t^2 / 4 \right).
      \end{equation}
 \end{enumerate}
\end{lemma}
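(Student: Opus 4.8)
I would build both parts on the representation \eqref{eq:rep} together with the P\'olya-urn descriptions of the limiting degrees from Section~\ref{sec:previous_arbitrary}.

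\textbf{Part (a).} First I would invoke the aggregation property of the Dirichlet distribution: if $X\sim\Dir(d_T(1),\dots,d_T(\left|T\right|))$ then, since the total degree of a tree on $\left|T\right|$ vertices is $2(\left|T\right|-1)$, one has $\sum_{i\in U}X_i\sim\Beta\left(d,\,2\left|T\right|-2-d\right)$. Combined with \eqref{eq:rep} this gives $\sum_{i\in U}D_i(T)\stackrel{d}{=}2Z_{\left|T\right|}B$ with $B\sim\Beta(d,2\left|T\right|-2-d)$ independent of $Z_{\left|T\right|}\sim\GGa(2\left|T\right|-1,2)$. I would then write
\[
\P\Big(\sum_{i\in U}D_i(T)>t\Big)=\int_0^1 f_B(b)\,\P\big(Z_{\left|T\right|}>t/(2b)\big)\,db
\]
and extract the tail by Laplace's method. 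Since $Z_{\left|T\right|}$ has the Gaussian-type tail $\P(Z_{\left|T\right|}>u)\sim u^{2\left|T\right|-3}e^{-u^2}/\Gamma(\left|T\right|-1/2)$, the factor $e^{-t^2/(4b^2)}$ concentrates the mass at $b=1$; the substitution $x=1/b^2$ turns the integral into a Laplace transform $\int_1^\infty(\cdots)e^{-t^2x/4}\,dx$, and Watson's lemma applies, the local behaviour $(1-b)^{m-1}\sim(\tfrac12(x-1))^{m-1}$ near $b=1$ (with $m=2\left|T\right|-2-d$) producing the $\Gamma$-factor and the power $t^{1-2\left|T\right|+2d}$. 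Tracking the constants recovers $c(\left|T\right|,d)$ exactly. The degenerate case $U=\{1,\dots,\left|T\right|\}$, where $d=2\left|T\right|-2$ and $B\equiv1$, must be treated separately, but there $\sum_i D_i(T)\stackrel{d}{=}2Z_{\left|T\right|}$ and the claim is immediate from the Gaussian tail, with the same constant.

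\textbf{Part (b).} Because $L>\left|T\right|$, every $i\ge L$ is a vertex added during the process; it enters with degree one, so the pair (complementary count, degree) evolves exactly like the urn analyzed in Section~\ref{sec:previous_arbitrary} with seed size $i$ and seed degree $1$. Specializing \eqref{eq:rep} to these parameters gives $D_i(T)\stackrel{d}{=}2Z_iB_i$ with $Z_i\sim\GGa(2i-1,2)$ and $B_i\sim\Beta(1,2i-3)$ independent, which one checks reproduces the moments \eqref{eq:mom_Janson}. Since $\P(B_i>\beta)=(1-\beta)^{2i-3}$, integrating over $Z_i$ yields the clean one-dimensional formula
\[
\P\big(D_i(T)>t\big)=\int_{t/2}^\infty\frac{2}{\Gamma(i-1/2)}\,z^{2i-2}e^{-z^2}\Big(1-\frac{t}{2z}\Big)^{2i-3}dz .
\]

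The crux is that bounding term by term is too lossy: a Markov/Chernoff estimate on $D_i$ overshoots by a factor $t$ (it yields $t^{4-2L}$ rather than $t^{3-2L}$), while the sharp per-term constants $c(i,1)=\Gamma(2i-2)/\Gamma(i-1/2)$ grow super-exponentially and so cannot be summed. I would instead \emph{sum before estimating}. Interchanging sum and integral (Tonelli), the substitution $y=z-t/2$ together with the identity $z\bigl(1-t/(2z)\bigr)=y$ collapses the whole family into
\[
\sum_{i\ge L}\P\big(D_i(T)>t\big)=2\,e^{-t^2/4}\int_0^\infty e^{-ty-y^2}\Big(y+\frac{t}{2}\Big)\sum_{j\ge L}\frac{y^{2j-3}}{\Gamma(j-1/2)}\,dy .
\]
Bounding the inner series by $\sum_{j\ge L}y^{2j-3}/\Gamma(j-1/2)\le y^{2L-3}e^{y^2}/\Gamma(L-1/2)$, which holds because $\Gamma(L-1/2+k)/\Gamma(L-1/2)\ge k!$ for $L\ge 3/2$, cancels the $e^{y^2}$ and leaves two elementary Gamma integrals; using $t\ge1$ to absorb $t^{1-2L}\le t^{3-2L}$ then gives \eqref{eq:tail_late} with $C(L)=\bigl(2\Gamma(2L-1)+\Gamma(2L-2)\bigr)/\Gamma(L-1/2)$.

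The two places I expect to need care are: in part (a), the rigorous justification of Watson's lemma with the exact constant $c(\left|T\right|,d)$ and the separate handling of the degenerate subset; and in part (b), the realization that one must sum over $i$ \emph{before} estimating, since the naive term-wise route genuinely fails. Once the algebraic collapse in part (b) is in hand, the crude series bound does all the remaining work, so the main conceptual obstacle is finding that collapse rather than the subsequent computation.
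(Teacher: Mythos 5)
Your proposal is correct, and both halves take routes that are genuinely different from the paper's, so it is worth comparing them. For part (a) you and the paper start from the same representation $\sum_{i\in U}D_i(T)\stackrel{d}{=}2BZ$ with $B\sim\Beta(d,2\left|T\right|-2-d)$ and $Z\sim\GGa(2\left|T\right|-1,2)$ (the paper derives it both from the urn decomposition and from \eqref{eq:rep}, handling $U=T$ separately exactly as you do), but the asymptotics are extracted very differently: the paper proves a standalone computational lemma that expands the regularized incomplete beta function as a finite sum (valid for integer parameters), reduces everything to the integrals $A_m=\int_t^\infty y^m e^{-y^2/4}\,dy$, and then verifies massive cancellations via alternating-sum identities and the asymptotic series for $\erfc$, whereas you run Laplace/Watson on the mixing integral. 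Your route is sound and shorter: the uniformity worry you flag is harmless because $t/(2b)\geq t/2$ for all $b\in(0,1]$, so the relative error in the $\GGa$ tail is uniformly $o(1)$, and carrying out Watson's lemma with the substitution $x=1/b^2$ does reproduce the constant $\Gamma(2\left|T\right|-2)/\bigl(2^{d-1}\Gamma(\left|T\right|-1/2)\Gamma(d)\bigr)=c(\left|T\right|,d)$; what the paper's heavier computation buys is an exact finite alternating-sum formula for the tail rather than just its leading order. For part (b) the paper uses the results of P\'ek\"oz, R\"ollin and Ross: the density $\kappa_{i-1}$ involving the Kummer $U$ function, their tail bound $\int_x^\infty\kappa_s(y)\,dy\leq\frac{s}{x}\kappa_s(x)$, and the integral representation of $U$, under which the sum over $i\geq L$ becomes a convergent geometric-type series inside the integral. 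Your proof replaces all of that machinery with the elementary representation $D_i(T)\stackrel{d}{=}2Z_iB_i$, $B_i\sim\Beta(1,2i-3)$, $Z_i\sim\GGa(2i-1,2)$ (justified, as you should state explicitly, by conditioning on $\PA(i,T)$: the conditional law of $D_i(T)$ depends only on $i$ and the entering degree $1$, consistent with the paper's remark that the moments for $i>\left|T\right|$ agree with \eqref{eq:mom_Janson}), then exploits the identity $z\bigl(1-t/(2z)\bigr)=z-t/2$ to get a closed-form tail integral, sums under the integral by Tonelli, and bounds the series by $e^{y^2}$ via $\Gamma(L-1/2+k)\geq\Gamma(L-1/2)\,k!$. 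Both proofs share the essential insight you identify --- one must sum over $i$ before estimating, since the sharp per-term constants $c(i,1)$ are not summable --- but yours is more self-contained, at the cost of not yielding the per-term Kolmogorov-metric information that the paper's cited results provide.
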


We postpone the proof of Lemma~\ref{lem:that_we_need} to Section~\ref{sec:lemma_proof}, as it results from a lengthy computation.
As an immediate corollary we get the asymptotic tail behavior of $D_{\max} \left( T \right)$.

\begin{corollary}\label{cor:max_tail}
Let $T$ be a finite tree and let $m := \left| \left\{ i \in \left\{ 1, \dots, \left|T\right| \right\} : d_T \left( i \right) = \Delta \left( T \right) \right\} \right|$. Then
\begin{equation}\label{eq:max_tail_exact}
 \P \left( D_{\max} \left( T \right) > t \right) \sim m \times c \left( \left| T \right|, \Delta \left( T \right) \right) t^{1 - 2\left|T\right| + 2 \Delta \left( T \right)} \exp \left( - t^2 / 4 \right)
\end{equation}
as $t \to \infty$, where the constant $c$ is as in~\eqref{eq:const}.
\end{corollary}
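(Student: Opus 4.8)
The plan is to use the almost sure identity $D_{\max}(T) = \max_{i \geq 1} D_i(T)$ to rewrite the event $\{D_{\max}(T) > t\}$ as the union $\bigcup_{i \geq 1}\{D_i(T) > t\}$, and then to extract the dominant contribution via a union bound (for the upper bound) together with a second-order Bonferroni inequality (for the matching lower bound). The guiding heuristic is that part (a) of Lemma~\ref{lem:that_we_need}, applied to the singletons $U = \{i\}$ with $1 \le i \le |T|$, gives
\[
\P(D_i(T) > t) \sim c(|T|, d_T(i)) \, t^{1 - 2|T| + 2 d_T(i)} \exp(-t^2/4),
\]
so that among the seed vertices the polynomial order is governed by $d_T(i)$ and is maximized precisely by the $m$ vertices of degree $\Delta(T)$; every other contribution should turn out to be of strictly smaller order.

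For the upper bound I would apply the union bound and split the sum at $L = |T| + 1$:
\[
\P(D_{\max}(T) > t) \le \sum_{i=1}^{|T|} \P(D_i(T) > t) + \sum_{i = |T|+1}^{\infty} \P(D_i(T) > t).
\]
The second sum is controlled by part (b) of Lemma~\ref{lem:that_we_need} (legitimate since $|T|+1 > |T|$), which bounds it by $C(|T|+1)\, t^{1 - 2|T|} \exp(-t^2/4)$; because $\Delta(T) \geq 1$ this is $o\big(t^{1 - 2|T| + 2\Delta(T)} \exp(-t^2/4)\big)$ and hence negligible. In the first (finite) sum I would insert the singleton asymptotics above: the $m$ terms with $d_T(i) = \Delta(T)$ each contribute $c(|T|, \Delta(T))\, t^{1-2|T|+2\Delta(T)}\exp(-t^2/4)(1+o(1))$, while every term with $d_T(i) < \Delta(T)$ carries a strictly smaller power of $t$ and is therefore lower order. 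This yields the upper bound with the constant $m\, c(|T|, \Delta(T))$.

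For the lower bound I would discard all but the $m$ maximum-degree seed vertices, writing $M = \{i : d_T(i) = \Delta(T)\} \subseteq \{1,\dots,|T|\}$ and using
\[
\P(D_{\max}(T) > t) \ge \P\Big( \bigcup_{i \in M} \{D_i(T) > t\} \Big) \ge \sum_{i \in M} \P(D_i(T) > t) - \sum_{\{i,j\} \subseteq M} \P\big(D_i(T) > t,\, D_j(T) > t\big).
\]
The leading sum again matches $m\, c(|T|, \Delta(T))\, t^{1-2|T|+2\Delta(T)}\exp(-t^2/4)$. The crucial observation—and the only place any real care is needed—is that the pairwise terms are exponentially smaller: since $\{D_i(T) > t,\, D_j(T) > t\} \subseteq \{D_i(T) + D_j(T) > 2t\}$, part (a) applied to $U = \{i,j\}$ (so that $d = 2\Delta(T)$) bounds each such probability by a polynomial times $\exp(-(2t)^2/4) = \exp(-t^2)$, which is $o\big(t^{1-2|T|+2\Delta(T)}\exp(-t^2/4)\big)$. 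As there are only finitely many pairs, the Bonferroni correction is negligible and the lower bound matches the upper bound.

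I expect the only genuinely delicate point to be this comparison of exponential rates in the Bonferroni step: the argument hinges on the fact that the joint tail of two large limiting degrees decays like $\exp(-t^2)$ rather than $\exp(-t^2/4)$, which is exactly what part (a) of Lemma~\ref{lem:that_we_need} delivers through the sum $D_i(T) + D_j(T)$. Everything else is bookkeeping of polynomial orders, so the corollary follows essentially immediately once the lemma is in hand.
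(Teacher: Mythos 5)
Your proposal is correct and follows essentially the same route as the paper's proof: a union bound split between the seed vertices and the tail (handled by Lemma~\ref{lem:that_we_need}(b)) for the upper bound, and a second-order Bonferroni inequality over the $m$ maximum-degree vertices for the lower bound. In particular, your key step---bounding $\P\left(D_i(T)>t,\,D_j(T)>t\right)$ by $\P\left(D_i(T)+D_j(T)>2t\right)$ and invoking Lemma~\ref{lem:that_we_need}(a) with $U=\{i,j\}$ to get the $\exp(-t^2)$ decay---is exactly the paper's argument.
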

\begin{proof}
Recall the fact that $D_{\max} \left( T \right) = \max_{i \geq 1} D_i \left( T \right)$ almost surely. First, a union bound gives us that
\[
 \P \left( D_{\max} \left( T \right) > t \right) \leq \sum_{i=1}^{m} \P \left( D_i \left( T \right) > t \right) + \sum_{i=m+1}^{\left|T\right|} \P \left( D_i \left( T \right) > t \right) + \sum_{i=\left|T\right|+1}^{\infty} \P \left( D_i \left( T \right) > t \right).
\]
Then using Lemma~\ref{lem:that_we_need} we get the upper bound required for~\eqref{eq:max_tail_exact}: the first sum gives the right hand side of~\eqref{eq:max_tail_exact}, while the other two sums are of smaller order. 
For the lower bound we first have that
\begin{equation}\label{eq:lower}
 \P \left( D_{\max} \left( T \right) > t \right) \geq \sum_{i=1}^{m} \P \left( D_i \left( T \right) > t \right) - \sum_{i=1}^{m} \sum_{j=i+1}^{m} \P \left( D_i \left( T \right) > t, D_j \left( T \right) > t \right).
\end{equation}
Lemma~\ref{lem:that_we_need}(a) with $U = \left\{ i, j \right\}$ implies that for any $1 \leq i < j \leq {m}$,
\begin{equation}\label{eq:ij_bound}
\P \left( D_i \left( T \right) > t, D_j \left( T \right) > t \right) \leq \P \left( D_i \left( T \right) + D_j \left( T \right) > 2t \right) \leq C_{i,j} \left( T \right) t^{1 - 2 \left|T\right| + 4 \Delta \left( T \right)} \exp \left( - t^2 \right)
\end{equation}
for some constant $C_{i,j} \left( T \right)$ and all $t$ large enough. The exponent $-t^2$, appearing on the right hand side of~\eqref{eq:ij_bound}, is smaller by a constant factor than the exponent $-t^2 / 4$, appearing in the asymptotic expression for $\P \left( D_i \left( T \right) > t \right)$ (see~\eqref{eq:tail_seed}). Consequently the second sum on the right hand side of~\eqref{eq:lower} is of smaller order than the first sum, and so we have that $\P \left( D_{\max} \left( T \right) > t \right) \geq \left( 1 - o \left( 1 \right) \right) \sum_{i=1}^{m} \P \left( D_i \left( T \right) > t \right)$ as $t \to \infty$. We can conclude using Lemma~\ref{lem:that_we_need}.
\end{proof}

\section{Distinguishing trees using the maximum degree}\label{sec:distinguishing}

In this section we first prove Theorems~\ref{th:diff_size_deg} and~\ref{th:arbstars}, both using Corollary~\ref{cor:max_tail} (see Section~\ref{sec:proofs}). 
Then in Section~\ref{sec:candidate} we describe a particular way of generalizing the notion of maximum degree which we believe should provide a way to prove Conjecture~\ref{conj:main}. At present we are missing a technical result, see Conjecture~\ref{conj:technical} below, and we prove Conjecture~\ref{conj:main} assuming that this holds.

\subsection{Proofs}\label{sec:proofs}

\begin{proof}\textbf{of Theorem~\ref{th:diff_size_deg}}
 We first provide a simple proof of distinguishing two trees of the same size but with different maximum degree, and then show how to extend this argument to the other cases.

\medskip

\textbf{Case 1: $\left|S\right| - \Delta \left( S \right) \neq \left| T \right| - \Delta \left( T \right)$.}  W.l.o.g.\ suppose that $\left|S\right| - \Delta \left( S \right) < \left| T \right| - \Delta \left( T \right)$. Clearly for any $t > 0$ and $n \geq \max \left\{ \left|S\right|, \left|T \right| \right\}$ one has
\begin{align*}
 \TV \left( \PA \left( n, S \right) , \PA \left( n, T \right) \right) &\geq \TV \left( \Delta \left( \PA \left(n, S \right) \right), \Delta \left( \PA \left( n, T \right) \right) \right) \\
 &\geq \P \left( \Delta \left( \PA \left(n, S \right) \right) > t \sqrt{n} \right) - \P \left( \Delta \left( \PA \left(n, T \right) \right) > t \sqrt{n} \right).
\end{align*}
Taking the limit as $n \to \infty$ this implies that
\begin{equation}\label{eq:delta_D_max}
 \delta \left( S, T \right) \geq \sup_{t > 0} \left[ \P \left( D_{\max} \left( S \right) > t \right) - \P \left( D_{\max} \left( T \right) > t \right) \right].
\end{equation}
By Corollary~\ref{cor:max_tail} and the fact that $\left|S\right| - \Delta \left( S \right) < \left| T \right| - \Delta \left( T \right)$ we have that $\P \left( D_{\max} \left( S \right) > t \right) > \P \left( D_{\max} \left( T \right) > t \right)$ for large enough $t$, which concludes the proof in this case.

\medskip

\textbf{Case 2: $\left|S \right| \neq \left|T\right|$.} W.l.o.g.\ suppose that $\left|S\right| < \left| T \right|$. If $\left|S\right| - \Delta \left( S \right) \neq \left| T \right| - \Delta \left( T \right)$ then by Case~1 we have that $\delta \left( S , T \right) > 0$, so we may assume that $\left|S\right| - \Delta \left( S \right) = \left| T \right| - \Delta \left( T \right)$. Just as in the proof of Case~1 we have that 
\begin{equation}\label{eq:delta_lower2}
 \delta \left( S, T \right) \geq \sup_{t > 0} \left[ \P \left( D_{\max} \left( T \right) > t \right) - \P \left( D_{\max} \left( S \right) > t \right) \right].
\end{equation}
Corollary~\ref{cor:max_tail} provides the asymptotic behavior for $\P \left( D_{\max} \left( T \right) > t \right)$ in the form of~\eqref{eq:max_tail_exact}, where $m \geq 1$. 

\medskip

To find an upper bound for $\P \left( D_{\max} \left( S \right) > t \right)$, first notice that $\Delta \left( \PA \left( \left| T \right|, S \right) \right) \leq \Delta \left(T \right)$, with equality holding if and only if all of the $\left| T \right| - \left| S \right|$ vertices of $\PA \left( \left| T \right|, S \right)$ that were added to $S$ connect to the same vertex $i \in \left\{ 1, 2, \dots, \left|S \right| \right\}$ and $d_S \left( i \right) = \Delta \left( S \right)$. Consequently, if $\Delta \left( \PA \left( \left| T \right|, S \right) \right) = \Delta \left( T \right)$, then there is exactly one vertex $j \in \left\{ 1, 2, \dots, \left| T \right| \right\}$ such that $d_{\PA \left( \left| T \right|, S \right)} \left( j \right) = \Delta \left( T \right)$. This, together with Corollary~\ref{cor:max_tail}, shows that on the one hand
\[
 \P \left( D_{\max} \left( S \right) > t \, \middle| \, \Delta \left( \PA \left( \left| T \right|, S \right) \right) < \Delta \left( T \right) \right) = o \left( t^{1-2\left|T\right| + 2 \Delta \left( T \right)} \exp \left( -t^2 / 4 \right)  \right),
\]
as $t \to \infty$, and on the other hand
\[
 \P \left( D_{\max} \left( S \right) > t \, \middle| \, \Delta \left( \PA \left( \left| T \right|, S \right) \right) = \Delta \left( T \right) \right) \leq  \left( 1 + o \left( 1 \right) \right) c \left( \left|T \right|, \Delta \left( T \right) \right) t^{1-2\left|T\right| + 2 \Delta \left( T \right)} \exp \left( -t^2 / 4 \right)
\]
as $t \to \infty$. Consequently we have that 
\[
 \P \left( D_{\max} \left( S \right) > t \right) \leq \left( 1 + o \left( 1 \right) \right) \P \left( \Delta \left( \PA \left( \left| T \right|, S \right) \right) = \Delta \left( T \right) \right) c \left( \left|T \right|, \Delta \left( T \right) \right) t^{1-2\left|T\right| + 2 \Delta \left( T \right)} \exp \left( -t^2 / 4 \right)
\]
as $t \to \infty$, which combined with the tail behavior of $D_{\max} \left( T \right)$ gives that
\begin{multline*}
 \P \left( D_{\max} \left( T \right) > t \right) - \P \left( D_{\max} \left( S \right) > t \right) \\
\geq \left( 1 - o \left( 1 \right) \right) \P \left( \Delta \left( \PA \left( \left| T \right|, S \right) \right) < \Delta \left( T \right) \right) c \left( \left|T \right|, \Delta \left( T \right) \right) t^{1-2\left|T\right| + 2 \Delta \left( T \right)} \exp \left( -t^2 / 4 \right)
\end{multline*}
as $t\to \infty$. To conclude the proof, notice that $\P \left( \Delta \left( \PA \left( \left| T \right|, S \right) \right) < \Delta \left( T \right) \right)$ is at least as great as the probability that vertex $\left|S\right|+1$ connects to a leaf of $S$, which has probability at least $1/\left( 2 \left|S\right| - 2 \right)$.

\medskip

\textbf{Case 3: $|S|=|T|$, different degree profiles.} Let $z \in \{1,\hdots,|T|\}$ be the first index such that $d_S(z) \neq d_T(z)$ and assume w.l.o.g.\ that $d_S(z) < d_T(z)$. First we have that
\[ \P \left( D_{\max} \left( T \right) > t \right) \geq \P \left(\exists i \in [z-1] : D_i \left( T \right) > t \right) + \P \left(D_z \left( T \right) > t \right) - \sum_{i=1}^{z-1} \P \left( D_z \left( T \right) > t , D_i \left( T \right) > t \right)
\]
and 
\[
 \P \left( D_{\max} \left( S \right) > t \right) \leq \P \left(\exists i \in [z-1] : D_i \left( S \right) > t \right) + \sum_{i=z}^{\infty} \P \left( D_i \left( S \right) > t \right).
\]
Now observe that one can couple the evolution of $\PA(n,T)$ and $\PA(n,S)$ in such a way that the degrees of vertices $1, \hdots, z-1$ stay the same in both trees. Thus one clearly has 
$$\P \left(\exists i \in [z-1] : D_i \left( T \right) > t \right) = \P \left(\exists i \in [z-1] : D_i \left( S \right) > t \right) .$$
Putting the three above displays together one obtains
\begin{multline*}
 \P \left( D_{\max} \left( T \right) > t \right) - \P \left( D_{\max} \left( S \right) > t \right) \\
 \geq \P \left(D_z \left( T \right) > t \right) - \sum_{i=1}^{z-1} \P \left( D_z \left( T \right) > t , D_i \left( T \right) > t \right) - \sum_{i=z}^{\infty} \P \left( D_i \left( S \right) > t \right).
\end{multline*}
Now using Lemma~\ref{lem:that_we_need} one easily gets (for some constant $C>0$) that 
\begin{align*}
& \P \left(D_z \left( T \right) > t \right) \sim c \left( \left| T \right|, d_T(z) \right) t^{1 - 2\left|T\right| + 2d_T(z)} \exp \left( - t^2 / 4 \right) ,\\
& \sum_{i=1}^{z-1} \P \left( D_z \left( T \right) > t , D_i \left( T \right) > t \right) \leq \sum_{i=1}^{z-1} \P \left( D_z \left( T \right) + D_i \left( T \right) > 2t \right) \\
&\qquad \qquad \qquad  \leq \sum_{i=1}^{z-1} \left(1 + o \left( 1 \right) \right) c \left( \left| T \right|, d_T(z) + d_T(i) \right) (2t)^{1 - 2\left|T\right| + 2(d_T(z) + d_T(i))} \exp \left( - t^2 \right) ,\\
& \sum_{i=z}^{\infty} \P \left( D_i \left( S \right) > t \right) \leq C t^{1-2|T| + 2d_S(z)} \exp \left( - t^2 / 4 \right) . 
\end{align*}
In particular, since $d_S(z) < d_T(z)$ and $t^{\alpha} \exp(-t^2) = o(\exp(-t^2/4))$ for any $\alpha$, this shows that 
\[
 \P \left( D_{\max} \left( T \right) > t \right) - \P \left( D_{\max} \left( S \right) > t \right) \geq \left( 1 - o \left( 1 \right) \right) c \left( \left| T \right|, d_T(z) \right) t^{1 - 2\left|T\right| + 2d_T(z)} \exp \left( - t^2 / 4 \right),
\]
which, together with~\eqref{eq:delta_lower2}, concludes the proof.
\end{proof}

\begin{proof} \textbf{of Theorem~\ref{th:arbstars}} 
 As before we have that 
\begin{align}
 \delta \left( S_k, T \right) &\geq \sup_{t \geq 0} \left[ \P \left( D_{\max} \left( S_k \right) > t \right) - \P \left( D_{\max} \left( T \right) > t \right) \right] \notag\\
 &\geq \P \left( D_{\max} \left( S_k \right) > \sqrt{k}/2 \right) - \P \left( D_{\max} \left( T \right) > \sqrt{k}/2 \right). \label{eq:arbstars_proof}
\end{align}
By Corollary~\ref{cor:max_tail}, we know that the second term in~\eqref{eq:arbstars_proof} goes to zero as $k \to \infty$ for any fixed $T$. 
 We can lower bound the first term in~\eqref{eq:arbstars_proof} by $\P \left( D_{1} \left( S_k \right) > \sqrt{k}/2 \right) = 1 - \P \left( D_{1} \left( S_k \right) \leq \sqrt{k}/2 \right)$. From~\eqref{eq:mom_Janson_gen} we have that the first two moments of $D_1 \left( S_k \right)$ are $\E D_1 \left( S_k \right) = \Gamma \left( k \right) / \Gamma \left( k - 1/2 \right)$ and $\E  D_1 \left( S_k \right)^2 = \Gamma \left( k + 1 \right) / \Gamma \left( k \right) = k$. From standard facts about the $\Gamma$ function and Stirling series one has that $0 \leq \E D_1 \left( S_k \right) - \sqrt{k-1} \leq \left( 6 \sqrt{k-1} \right)^{-1}$ and then also
\[
 \Var \left( D_1 \left( S_k \right) \right) = \E  D_1 \left( S_k \right)^2 - \left( \E  D_1 \left( S_k \right) \right)^2 \leq k - \left( k - 1 \right) = 1.
\]
 Therefore Chebyshev's inequality implies that $\lim_{k \to \infty} \P \left( D_{1} \left( S_k \right) \leq \sqrt{k}/2 \right) = 0$.
\end{proof}

\subsection{Towards a proof of Conjecture~\ref{conj:main}}\label{sec:candidate}

Our proof of Theorem~\ref{th:diff_size_deg} above relied on the precise asymptotic tail behavior of $D_{\max} \left( T \right)$, as described in Corollary~\ref{cor:max_tail}. 
%It is possible that this approach can be pushed further---by computing higher order terms of the tail---in order to be able to distinguish trees that are of the same size but which have different degree counts. 
In order to distinguish two trees with the same degree profile (such as the pair of trees in Figure~\ref{fig:example}), it is necessary to incorporate information about the graph structure. Indeed, if $S$ and $T$ have the same degree profiles, then it is possible to couple $\PA \left( n, S \right)$ and $\PA \left( n, T \right)$ such that they have the same degree profiles for every $n$.

Thus a possible way to prove Conjecture~\ref{conj:main} is to generalize the notion of maximum degree in a way that incorporates information about the graph structure, and then use similar arguments as in the proofs above. A candidate is the following.

\begin{definition}\label{def:U-max_degree}
 Given a tree $U$, define the \emph{$U$-maximum degree} of a tree $T$, denoted by $\Delta_U \left( T \right)$, as
\[
 \Delta_U \left( T \right) = \max_{\varphi} \sum_{u \in V\left( U \right)} d_T \left( \varphi \left( u \right) \right),
\]
where $V\left( U \right)$ denotes the vertex set of $U$, and the maximum is taken over all injective graph homomorphisms from $U$ to $T$. That is, $\varphi$ ranges over all injective maps from $V\left( U \right)$ to $V \left( T \right)$ such that $\left\{ u, v \right\} \in E \left( U \right)$ implies that $\left\{ \varphi \left( u \right), \varphi \left( v \right) \right\} \in E \left( T \right)$, where $E \left( U \right)$ denotes the edge set of $U$, and $E \left( T \right)$ is defined similarly.
\end{definition}

When $U$ is a single vertex, then $\Delta_U \equiv \Delta$, so this indeed generalizes the notion of maximum degree. We conjecture the following.

\begin{conjecture}\label{conj:technical}
 Suppose $S$ and $T$ are two non-isomorphic trees of the same size. Then
\[
 \limsup_{n \to \infty} \P \left( \Delta_T \left( \PA \left( n, S \right) \right) > t \sqrt{n} \right) = o \left(   t^{2\left|T\right| - 3} \exp \left( - t^2 / 4 \right) \right)
\]
as $t\to \infty$.
\end{conjecture}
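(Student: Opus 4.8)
The plan is to transfer the statement to the almost sure scaling limit and then bound a tail by a union bound over embedded copies of $T$. First I would argue, exactly as in Section~\ref{sec:previous_arbitrary}, that $\Delta_T\left(\PA(n,S)\right)/\sqrt{n}$ converges almost surely to
\[
\Delta_T^\infty(S) := \max_{W} \sum_{v \in W} D_v(S),
\]
where $W$ ranges over the vertex sets of subtrees of $\PA(\infty,S)$ isomorphic to $T$. The restriction to subtrees is legitimate because $\PA(n,S)$ is a tree: any injective homomorphic image of $T$ spans $\left|T\right|-1$ host edges on $\left|T\right|$ vertices, so the induced subgraph is exactly a subtree $\cong T$. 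The exchange of the limit with the maximum is justified by the facts that $D_v(S)\to 0$ as $v\to\infty$ (so the maximizing copy a.s.\ lives in a bounded window of indices) and that each fixed degree sum converges. This reduces the conjecture to showing $\P\left(\Delta_T^\infty(S) > t\right) = o\!\left(t^{2\left|T\right|-3}\exp(-t^2/4)\right)$.

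The engine of the tail estimate is Lemma~\ref{lem:that_we_need}(a), applied not to $S$ but to the random finite tree $H=\PA(m,S)$. Fix a copy $W$ and let $m=\max(W)$. Conditionally on $\PA(m,S)=H$ one has $D_v(S)=D_v(H)$ for all $v\le m$, so Lemma~\ref{lem:that_we_need}(a) with host $H$ and subset $W$ gives
\[
\P\left(\sum_{v\in W} D_v(S) > t \,\middle|\, \PA(m,S)=H\right) \sim c\left(m, d_W\right)\, t^{\,1-2m+2d_W}\exp(-t^2/4), \qquad d_W := \sum_{v\in W} d_{H}(v).
\]
Since the $m-\left|T\right|$ vertices of $\left\{1,\dots,m\right\}\setminus W$ each have degree at least one, $d_W \le m+\left|T\right|-2$, and hence the exponent $1-2m+2d_W$ is \emph{at most} $2\left|T\right|-3$, with equality if and only if every vertex outside $W$ is a leaf of $H$. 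I would then organise the union bound by grouping copies according to $m=\max(W)$: for each $m$ there are finitely many copies, bounded via the display above, while the contribution of large $m$ is controlled by Lemma~\ref{lem:that_we_need}(b), since any copy contributing at level $t$ forces some high-index vertex to have degree exceeding $t/\left|T\right|$, whose probability is summable in $m$.

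The structural input meant to create room below the threshold is that $S\not\cong T$: the copy $W=\left\{1,\dots,\left|T\right|\right\}$ consisting of the whole seed induces $S$, not $T$, hence is inadmissible, so every admissible copy omits at least one seed vertex and uses at least one later vertex. The hope is that each surviving copy then has exponent strictly below $2\left|T\right|-3$, yielding the desired $o(\cdot)$.

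The hard part — and the reason this remains conjectural — is that omitting a seed vertex is \emph{not} enough to beat the exponent. A copy that drops a single seed \emph{leaf} and instead uses one late vertex attached as a new leaf can satisfy the equality condition above (all omitted vertices are leaves) and therefore attains the exponent $2\left|T\right|-3$ exactly; the degree sum still collects the full, extra degree of the high-degree seed neighbour of the dropped leaf. For the pair in Figure~\ref{fig:example} one can move the pendant edge by one position, so $\PA(\left|S\right|+1,S)$ contains such a copy of $T$ with positive probability, which appears to force $\P\left(\Delta_T^\infty(S)>t\right)$ to be of order $t^{2\left|T\right|-3}\exp(-t^2/4)$ rather than $o$ of it. Thus the genuine obstacle is the analysis of these threshold-attaining embeddings: a correct argument seems to require tracking the \emph{leading constants} and showing that the aggregate constant produced by $\PA(n,S)$ is strictly smaller than the one produced by $\PA(n,T)$ (equivalently, that the weighted count of threshold copies is strictly larger for the true seed). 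This is precisely the step I do not see how to push through with tail estimates alone, and it suggests that the clean route to Conjecture~\ref{conj:main} may be through a strict comparison of these constants rather than the $o$-statement as literally phrased.
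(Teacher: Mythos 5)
You have been asked to prove a statement that the paper itself does not prove: this is Conjecture~\ref{conj:technical}, which the authors explicitly leave open (they only show that Conjecture~\ref{conj:main} would follow from it), so there is no proof in the paper to compare yours against. Within that context, the reductions you do carry out are sound: on a tree host every injective homomorphic image of $T$ is indeed an induced subtree isomorphic to $T$; the Markov property of the process justifies applying Lemma~\ref{lem:that_we_need}(a) with the random seed $H = \PA(m,S)$; and the exponent bookkeeping $1 - 2m + 2d_W \leq 2\left|T\right| - 3$, with equality precisely when every vertex of $H$ outside the copy $W$ is a leaf, is correct. The one step I would flag as genuinely delicate is the exchange of the limit $n \to \infty$ with the maximum over infinitely many copies (your claim that $\Delta_T(\PA(n,S))/\sqrt{n}$ converges a.s.\ to $\Delta_T^{\infty}(S)$ needs an argument in the spirit of M\'ori's proof that $D_{\max} = \max_i D_i$); but, as noted below, the interesting direction does not require it.

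In fact your final paragraph is stronger than you present it: it is a disproof sketch of the conjecture as literally stated, and it only uses the easy direction of the limit exchange. Concretely, for the pair of Figure~\ref{fig:example}, let $H$ be the $7$-vertex labeled tree obtained from $S$ when vertex $7$ attaches to the middle vertex of the $5$-path (this has probability $2/10 = 1/5$), and let $W$ be the copy of $T$ in $H$ omitting the original pendant leaf, so that the single vertex outside $W$ is a leaf of $H$ and $d_W = 11$. Since the copy $W$ and its edges persist in $\PA(n,S)$ for all $n \geq 7$, conditioning on the first attachment gives, for every $n \geq 7$,
\[
\P \left( \Delta_T \left( \PA \left( n, S \right) \right) > t \sqrt{n} \right) \geq \frac{1}{5} \, \P \left( \sum_{v \in W} d_{\PA(n,H)} \left( v \right) > t \sqrt{n} \right),
\]
and letting $n \to \infty$ (only the almost sure convergence of finitely many fixed degrees is needed, which the paper establishes) followed by Lemma~\ref{lem:that_we_need}(a) applied to $H$ with $U = W$ yields
\[
\limsup_{n \to \infty} \P \left( \Delta_T \left( \PA \left( n, S \right) \right) > t \sqrt{n} \right) \geq \left( 1 - o \left( 1 \right) \right) \tfrac{1}{5} \, c \left( 7, 11 \right) t^{9} \exp \left( - t^2 / 4 \right),
\]
where $9 = 2\left|T\right| - 3$. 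Hence the left-hand side is \emph{not} $o \left( t^{2\left|T\right|-3} \exp \left( -t^2/4 \right) \right)$, and the conjecture fails for this pair. So the ``gap'' is not a defect of your argument but of the statement itself: any proof attempt must fail on these leaf-moving, threshold-attaining embeddings. Your proposed repair is also the right one: since the paper only uses the conjecture to make $\P ( \Delta_T ( \PA(n,T) ) > t\sqrt{n} ) - \P ( \Delta_T ( \PA(n,S) ) > t\sqrt{n} )$ eventually positive, it suffices (and appears necessary) to replace the $o(\cdot)$ statement by a strict inequality between the aggregate leading constants in front of $t^{2\left|T\right|-3}\exp\left(-t^2/4\right)$ for the two seeds, keeping in mind that for the true seed $T$ the same leaf-moving copies also contribute to its constant, so the comparison must be between aggregates on both sides.
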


If this conjecture were true, then Conjecture~\ref{conj:main} also follows, as we now show.

\medskip

\begin{proof}\textbf{of Conjecture~\ref{conj:main} assuming Conjecture~\ref{conj:technical} holds} Assume $\left| S \right| = \left| T \right|$; if $\left| S \right| \neq \left| T \right|$ we already know from Theorem~\ref{th:diff_size_deg} that $\delta \left( S, T \right) > 0$. 
As in the proof of Theorem~\ref{th:diff_size_deg}, for any $t > 0$ and $n \geq \max \left\{ \left|S\right|, \left|T \right| \right\}$ we have that 
\begin{align*}
 \TV \left( \PA \left( n, S \right) , \PA \left( n, T \right) \right) &\geq \TV \left( \Delta_T \left( \PA \left(n, S \right) \right), \Delta_T \left( \PA \left( n, T \right) \right) \right) \\
 &\geq \P \left( \Delta_T \left( \PA \left(n, T \right) \right) > t \sqrt{n} \right) - \P \left( \Delta_T \left( \PA \left(n, S \right) \right) > t \sqrt{n} \right),
\end{align*}
 and consequently
\begin{equation}\label{eq:delta_lower}
 \delta \left( S, T \right) \geq \sup_{t > 0} \left\{ \liminf_{n \to \infty} \P \left( \Delta_T \left( \PA \left(n, T \right) \right) > t \sqrt{n} \right) - \limsup_{n \to \infty} \P \left( \Delta_T \left( \PA \left(n, S \right) \right) > t \sqrt{n} \right) \right\}.
\end{equation}
Since $\varphi \left( i \right) = i$ for $1 \leq i \leq \left| T \right|$ is an injective graph homomorphism from $T$ to $\PA \left( n, T \right)$, we have that 
\[
 \liminf_{n \to \infty} \P \left( \Delta_T \left( \PA \left(n, T \right) \right) > t \sqrt{n} \right) \geq \liminf_{n \to \infty} \P \left( \sum_{i=1}^{\left|T\right|} d_{\PA \left( n, T \right)} \left( i \right) > t \sqrt{n} \right) = \P \left( \sum_{i=1}^{\left|T\right|} D_i \left( T \right) > t \right).
\]
By Lemma~\ref{lem:that_we_need} we know that 
\[
 \P \left( \sum_{i=1}^{\left|T\right|} D_i \left( T \right) > t \right) \sim c \left( \left| T\right|, 2\left|T\right| - 2 \right) t^{2 \left| T \right| - 3} \exp \left( - t^2 / 4 \right)
\]
as $t \to \infty$, which together with~\eqref{eq:delta_lower} and Conjecture~\ref{conj:technical} shows that $\delta \left( S, T \right) > 0$.
\end{proof}

\section{The weak limit of $\PA(n,T)$}\label{sec:weaklimit}

In this section we prove Theorem~\ref{th:weaklimit}. 
For two graphs $G$ and $H$ we write $G=H$ if $G$ and $H$ are isomorphic, and we use the same notation for rooted graphs. Recalling the definition of the Benjamini-Schramm limit (see [Definition 2.1., \cite{BBCS12}]), we want to prove that
$$\lim_{n \to \infty} \P\bigg(B_r(\PA(n,T), k_n(T)) = (H,y) \bigg) = \P\bigg(B_r(\cT, (0)) = (H,y)\bigg) ,$$
where $B_r(G,v)$ is the rooted ball of radius $r$ around vertex $v$ in the graph $G$, $k_n(T)$ is a uniformly random vertex in $\PA(n,T)$, $(H,y)$ is a finite rooted tree and $(\cT, (0))$ is the P{\'o}lya-point graph (with $m=1$). 

\medskip

We construct a forest $F$ based on $T$ as follows. To each vertex $v$ in $T$ we associate $d_T(v)$ isolated nodes with self loops, that is $F$ consists of $2 (|T| -1)$ isolated vertices with self loops. Our convention here is that a node with $k$ regular edges and one self loop has degree $k+1$. The graph evolution process $\PA(n,F)$ for forests is defined in the same way as for trees, and we couple the processes $\PA(n,T)$ and $\PA(n+|T|-2,F)$ in the natural way: when an edge is added to vertex $v$ of $T$ in $\PA(n,T)$ then an edge is also added to one of the $d_T(v)$ corresponding vertices of $F$ in $\PA(n+|T|-2,F)$, and furthermore newly added vertices are always coupled. We first observe that, clearly, the weak limit of $\PA(n+|T|-2,F)$ is the P{\'o}lya-point graph, that is
$$\lim_{n \to \infty} \P\bigg(B_r(\PA(n+|T|-2,F), k_n(F)) = (H,y) \bigg) = \P\bigg(B_r(\cT, (0)) = (H,y)\bigg) ,$$
where $k_n(F)$ is a uniformly random vertex in $\PA(n+|T|-2,F)$. We couple $k_n(F)$ and $k_n(T)$ in the natural way, that is if $k_n(F)$ is the $t^{th}$ newly created vertex in $\PA(n+|T|-2, F)$ then $k_n(T)$ is the $t^{th}$ newly created vertex in $\PA(n,T)$.
%(given that we study $r$-neighborhoods)
%, that is if $k_n(T)$ is such that $B_r(\PA(n,T), k_n(T))$ does not contain one of the original seed vertices from $T$ then $k_n(F) = k_n(T)$. 
To conclude the proof it is now sufficient to show that
$$\lim_{n \to \infty} \P\bigg(B_r(\PA(n+|T|-2,F), k_n(F)) \neq B_r(\PA(n,T), k_n(T)) \bigg) = 0 .$$
The following inequalities hold true (with a slight---but clear---abuse of notation when we write $v \in F$) for any $u >0$,
\begin{align*}
& \P\bigg(B_r(\PA(n+|T|-2,F), k_n(F)) \neq B_r(\PA(n,T), k_n(T)) \bigg) \\
& \leq \P\bigg(\exists v \in F \; \text{s.t.} \; v \in B_r(\PA(n+|T|-2,F), k_n(F)) \bigg) \\
& \leq \P\bigg(\exists v \in F, d_{\PA(n+|T|-2,F)}(v) < u\bigg) \\
&\quad + \P\bigg(\exists v \in B_r(\PA(n+|T|-2,F), k_n(F)) \; \text{s.t.} \; d_{\PA(n+|T|-2,F)}(v) \geq u\bigg) .
\end{align*}
It is easy to verify that for any $u > 0$,
$$\lim_{n \to \infty} \P\bigg(\exists v \in F, d_{\PA(n+|T|-2,F)}(v) < u\bigg) = 0 .$$
Furthermore since $B_r(\PA(n+|T|-2,F)$ tends to the P{\'o}lya-point graph we also have
\begin{multline*}
\lim_{n \to \infty} \P\bigg(\exists v \in B_r(\PA(n+|T|-2,F), k_n(F)) \; \text{s.t.} \; d_{\PA(n+|T|-2,F)}(v) \geq u\bigg) \\
 = \P\bigg(\exists v \in B_r(\cT, (0)) \; \text{s.t.} \; d_{\cT}(v) \geq u\bigg). 
\end{multline*}
By looking at the definition of $(\cT, (0))$ given in \cite{BBCS12} one can easily show that
$$\lim_{u \to \infty} \P\bigg(\exists v \in B_r(\cT, (0)) \; \text{s.t.} \; d_{\cT}(v) \geq u\bigg) = 0 ,$$
which concludes the proof.

\section{Proof of Lemma~\ref{lem:that_we_need}}\label{sec:lemma_proof}

In this section we prove Lemma~\ref{lem:that_we_need}. 
In light of the representation~\eqref{eq:rep} in Section~\ref{sec:previous_arbitrary}, part (a) of Lemma~\ref{lem:that_we_need} follows from a lengthy computation, the result of which we state separately.

\begin{lemma}\label{lem:computation}
 Fix positive integers $a$ and $b$. Let $B$ and $Z$ be independent random variables such that $B \sim \Beta \left( a, b \right)$ and $Z \sim \GGa \left( a + b + 1, 2 \right)$, and let $V = 2 B Z$. Then
\begin{equation}\label{eq:tail_asymp}
 \P \left( V > t \right) \sim c \left( \frac{a+b+2}{2}, a \right) t^{-1+a-b} \exp \left( - t^2 / 4 \right)
\end{equation}
 as $t\to\infty$, where the constant $c$ is as in~\eqref{eq:const}.
\end{lemma}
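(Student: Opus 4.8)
The plan is to condition on $B$, reduce everything to the (explicit) tail of $Z$, and then extract the asymptotics of the resulting one-dimensional integral by Laplace's method. First I would record the densities: $B$ has density $\frac{\Gamma(a+b)}{\Gamma(a)\Gamma(b)} x^{a-1}(1-x)^{b-1}$ on $[0,1]$, and $Z$ has density $\frac{2}{\Gamma((a+b+1)/2)} z^{a+b} e^{-z^2}$ on $(0,\infty)$ (the normalizing constant coming from the substitution $u=z^2$). A single integration by parts gives
$$\P(Z > s) = \frac{1}{\Gamma((a+b+1)/2)} s^{a+b-1} e^{-s^2}\left(1 + O(s^{-2})\right) \quad (s \to \infty),$$
with the $O(s^{-2})$ relative error uniform for $s$ bounded away from $0$. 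Writing $\P(V > t) = \int_0^1 \P(Z > t/(2x)) f_B(x)\, dx$, the crucial simplification is that the argument $s = t/(2x) \geq t/2$ is uniformly large over $x \in (0,1]$, so the relative error is $O(t^{-2})$ uniformly and factors out of the integral. Substituting the expansion and simplifying the powers of $x$ (using $x^{a-1}\cdot x^{-(a+b-1)} = x^{-b}$) reduces the problem to
$$\P(V > t) = \frac{1+O(t^{-2})}{\Gamma((a+b+1)/2)} \left(\frac{t}{2}\right)^{a+b-1} \frac{\Gamma(a+b)}{\Gamma(a)\Gamma(b)}\, I(t), \qquad I(t) := \int_0^1 x^{-b}(1-x)^{b-1} e^{-t^2/(4x^2)}\, dx.$$

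Next I would evaluate $I(t)$. The exponent $-t^2/(4x^2)$ is maximized at the endpoint $x=1$, so the mass concentrates there. The substitution $v = x^{-2}-1$ (so $v \geq 0$ with $v=0 \iff x=1$, and near $x=1$ one has $1-x \sim v/2$, $dx \sim -\tfrac12\, dv$, $x^{-b}\to 1$) puts $I(t)$ into Watson's-lemma form, whose leading term is
$$e^{-t^2/4}\, 2^{-b} \int_0^\infty v^{b-1} e^{-t^2 v/4}\, dv = e^{-t^2/4}\, 2^{-b}\, \Gamma(b)\, (4/t^2)^b,$$
so that $I(t) \sim \Gamma(b)\, 2^{b}\, t^{-2b} e^{-t^2/4}$. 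The contribution of the region $x \leq x_0 < 1$ is $O(e^{-t^2/(4x_0^2)})$—exponentially smaller than $e^{-t^2/4}$, with the Gaussian factor controlling the $x^{-b}$ singularity at the origin—which makes the approximation rigorous.

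Finally I would collect constants. The powers of $t$ combine as $t^{a+b-1} t^{-2b} = t^{-1+a-b}$, matching the claimed exponent, and the coefficient of $t^{-1+a-b} e^{-t^2/4}$ is
$$\frac{1}{\Gamma((a+b+1)/2)} \cdot 2^{-(a+b-1)} \cdot \frac{\Gamma(a+b)}{\Gamma(a)\Gamma(b)} \cdot \Gamma(b)\, 2^{b} = \frac{\Gamma(a+b)}{2^{a-1}\,\Gamma(a)\,\Gamma((a+b+1)/2)}.$$
Unwinding the definition~\eqref{eq:const} with $A = (a+b+2)/2$ and $B = a$ (so that $2A-2 = a+b$, $A-\tfrac12 = (a+b+1)/2$, and $\Gamma(B)=\Gamma(a)$, $2^{B-1}=2^{a-1}$) shows this equals $c\!\left((a+b+2)/2,\, a\right)$, as required. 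The main obstacle is the rigorous control of the Laplace step for $I(t)$: justifying that the local expansions of $v=x^{-2}-1$ and of $(1-x)^{b-1}$ near $x=1$ produce only lower-order corrections after integration, and that the region away from $x=1$ is negligible. Both are routine once the uniform tail estimate for $Z$ has collapsed the problem to the single integral $I(t)$.
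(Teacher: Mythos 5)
Your proof is correct, and it takes a genuinely different route from the paper's. The paper exploits the integrality of $a$ and $b$ throughout: it expands the regularized incomplete beta function as a finite sum, reduces $\P(V>t)$ to the exact alternating sum~\eqref{eq:tail_alt_sum} of Gaussian-type integrals $A_m=\int_t^\infty y^m e^{-y^2/4}\,dy$, evaluates those by recursion together with the asymptotic expansion of $\erfc$, and then checks---via the identity~\eqref{eq:alt_sum_zero} for alternating binomial sums---that every term of larger order than $t^{a-b-1}\exp(-t^2/4)$ cancels, the surviving coefficient being $(2r+1)!$. You instead condition on $B$, extract the generalized gamma tail $\P(Z>s)=\Gamma\bigl(\tfrac{a+b+1}{2}\bigr)^{-1}s^{a+b-1}e^{-s^2}\bigl(1+O(s^{-2})\bigr)$ by a single integration by parts, and resolve the resulting one-dimensional integral $I(t)$ by Watson's lemma at the endpoint $x=1$. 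Your bookkeeping checks out: $I(t)\sim\Gamma(b)\,2^{b}\,t^{-2b}e^{-t^2/4}$, and the assembled coefficient $\Gamma(a+b)/\bigl(2^{a-1}\Gamma(a)\Gamma(\tfrac{a+b+1}{2})\bigr)$ is exactly $c\bigl(\tfrac{a+b+2}{2},a\bigr)$ from~\eqref{eq:const}. The two points you defer as routine really are routine: the relative error in the tail of $Z$ is uniformly $O(s^{-2})$ for $s\geq t/2$ (iterate the integration by parts once more to get a two-sided bound), and after the substitution $v=x^{-2}-1$ the Watson integrand is bounded by $\tfrac12(1+v)^{(b-3)/2}$, which has polynomial growth and thus simultaneously controls the region $x$ bounded away from $1$ and the $x^{-b}$ singularity at the origin. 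The trade-off between the two arguments: yours is much shorter, avoids the cancellation machinery entirely, and works verbatim for arbitrary real $a,b>0$, since it never invokes the closed form of $I_x(a,b)$ for integer parameters; the paper's computation, in exchange, produces an exact finite formula for $\P(V>t)$ from which, in principle, the full asymptotic expansion (not just the leading term) can be read off with explicit constants.
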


\begin{proof}
  By definition we have for $t > 0$ that
 \begin{align*}
 \P \left( V > t \right) &= \P\left( 2BZ > t \right) = \int_{t/2}^{\infty} \int_{t/(2z)}^1 \frac{\Gamma(a+b)}{\Gamma(a) \Gamma(b)} x^{a-1} \left(1-x \right)^{b-1} dx \frac{2}{\Gamma\left(\frac{a+b+1}{2}\right)} z^{a+b} e^{-z^2} dz \\
 &= \int_{t/2}^{\infty} \left[ 1 - I_{t/(2z)} (a,b) \right] \frac{2}{\Gamma\left(\frac{a+b+1}{2}\right)} z^{a+b} e^{-z^2} dz,
\end{align*}
where $I_x (a,b) = \frac{\Gamma(a+b)}{\Gamma(a)\Gamma(b)} \int_0^x y^{a-1} (1-y)^{b-1} dy$ is the regularized incomplete Beta function. For positive integers $a$ and $b$, integration by parts and induction gives that
\[
 I_x (a,b) = 1 - \sum_{j=0}^{a-1} \binom{a+b-1}{j} x^j \left( 1- x \right)^{a+b-1-j}.
\]
Plugging this back in to the integral and doing a change of variables $y = 2z$, we get that
\[
 \P( V > t ) = \frac{2^{-\left(a+b\right)}}{\Gamma \left( \frac{a+b+1}{2} \right)} \sum_{j=0}^{a-1} \binom{a+b-1}{j} \int_t^\infty t^j \left( y - t \right)^{a+b-1-j} y \exp \left(-y^2 / 4 \right) dy.
\]
Expanding $\left( y - t \right)^{a+b-1-j}$ we arrive at the alternating sum formula
\begin{equation}\label{eq:tail_alt_sum}
 \P \left( V > t \right) = \frac{2^{-\left(a+b\right)}}{\Gamma \left( \frac{a+b+1}{2} \right)} \sum_{j=0}^{a-1} \sum_{k=0}^{a+b-1-j} \binom{a+b-1}{j} \binom{a+b-1-j}{k} \left( - 1 \right)^{a+b-1-j-k} t^{a+b-1-k} A_{k+1},
\end{equation}
where for $m \geq 0$ let 
\[
 A_m := \int_t^\infty y^m \exp \left( - y^2 / 4 \right) dy.
\]
Thus in order to show~\eqref{eq:tail_asymp} it is enough to show that for every $j$ such that $0 \leq j \leq a - 1$ we have
\begin{equation}\label{eq:to_show}
 \sum_{k=0}^{a+b-1-j} \binom{a+b-1-j}{k} \left( - 1 \right)^{a+b-1-j-k} t^{a+b-1-k} A_{k+1} \sim  \frac{2^{a+b-j} (a+b-1-j)!}{t^{a+b-1-2j}} \exp \left( - t^2 / 4 \right).
\end{equation}
To do this, we need to evaluate the integrals $\left\{ A_m \right\}_{m \geq 0}$. Recall that the complementary error function is defined as $\erfc \left( z \right) = 1 - \erf \left( z \right) = \left( 2 / \sqrt{\pi} \right) \int_z^{\infty} \exp \left( - u^2 \right) du$, and thus $A_0 = \sqrt{\pi} \erfc\left( t/2 \right)$; also $A_1 = 2 \exp \left( - t^2 / 4 \right)$. Integration by parts gives that for $m \geq 2$ we have $A_m = 2 t^{m-1} \exp \left( - t^2 / 4 \right) + 2 \left( m - 1 \right) A_{m-2}$. Iterating this, and using the values for $A_0$ and $A_1$, gives us that for $m$ odd we have
\begin{equation}\label{eq:A_m_odd}
 A_m = 2 t^{m-1} \exp \left( - t^2 / 4 \right)  \sum_{\ell = 0}^{\frac{m-1}{2}} \frac{\left( m - 1 \right)!!}{\left( m - 2\ell - 1 \right)!!} \left( \frac{2}{t^2} \right)^{\ell},
\end{equation}
and for $m$ even we have
\begin{equation}\label{eq:A_m_even}
 A_m = 2 t^{m-1} \exp \left( - t^2 / 4 \right) \sum_{\ell = 0}^{\frac{m}{2}-1} \frac{\left( m - 1 \right)!!}{\left( m - 2\ell - 1 \right)!!} \left( \frac{2}{t^2} \right)^{\ell} + 2^{\frac{m}{2}} \times \left( m - 1 \right)!! \times \sqrt{\pi} \erfc \left( t/2 \right) .
\end{equation}
In the following we fix $j$ such that $0 \leq j \leq a - 1$ and $a+b-1-j$ is odd---showing~\eqref{eq:to_show} when $a+b-1-j$ is even can be done in the same way. In order to abbreviate notation we let $r = (a+b-2-j)/2$. Plugging in the formulas~\eqref{eq:A_m_odd} and~\eqref{eq:A_m_even} into the left hand side of~\eqref{eq:to_show} we get that 
\begin{align}
 &\sum_{k=0}^{a+b-1-j} \binom{a+b-1-j}{k} \left( - 1 \right)^{a+b-1-j-k} t^{a+b-1-k} A_{k+1} = \sum_{k=0}^{2r+1} \binom{2r+1}{k} \left( - 1 \right)^{2r+1-k} t^{2r+1+j-k} A_{k+1} \notag\\
&\qquad\qquad = - \sum_{\ell = 0}^{r} \binom{2r+1}{2\ell} t^{2r+1+j-2\ell} A_{2\ell +1} + \sum_{\ell=0}^{r} \binom{2r+1}{2\ell+1} t^{2r+1+j-(2\ell+1)} A_{2\ell+2} \notag\\
&\qquad\qquad =  - \sum_{\ell = 0}^{r} \binom{2r+1}{2\ell} t^{2r+1+j-2\ell} 2 \exp\left( - t^2 / 4 \right) \sum_{u=0}^{\ell} 2^u \frac{\left(2\ell\right)!!}{\left(2\ell-2u\right)!!} t^{2\ell-2u} \notag\\
&\qquad\qquad \quad + \sum_{\ell=0}^{r} \binom{2r+1}{2\ell+1} t^{2r+1+j-(2\ell+1)} 2 \exp\left( - t^2 / 4 \right) \sum_{u=0}^\ell 2^u \frac{\left(2\ell+1\right)!!}{\left(2\ell+1-2u\right)!!} t^{2\ell+1-2u} \notag\\
&\qquad\qquad \quad + \sum_{\ell=0}^{r} \binom{2r+1}{2\ell+1} t^{2r+1+j-(2\ell+1)} 2^{\ell+1} \left( 2\ell+1\right)!! \sqrt{\pi} \erfc\left(t/2\right) \notag\\
&\qquad\qquad = 2 \exp \left( - t^2/4\right) \sum_{u=0}^{r} t^{2r+1+j-2u} 2^u \sum_{k=2u}^{2r+1} \binom{2r+1}{k}\left(-1\right)^{k+1} \frac{k!!}{\left(k-2u\right)!!} \label{eq:sum1}\\
&\qquad\qquad \quad + \sqrt{\pi} \erfc \left(t/2\right) \sum_{\ell=0}^{r} \binom{2r+1}{2\ell+1} t^{2r+1+j-(2\ell+1)} 2^{\ell+1} \left( 2\ell+1\right)!!. \label{eq:sum2}
\end{align}
An important fact that we will use is that for every polynomial $P$ with degree less than $n$ we have
\begin{equation}\label{eq:alt_sum_zero}
 \sum_{k=0}^n \binom{n}{k} \left( - 1 \right)^k P(k) = 0.
\end{equation}
Consequently, applying this to the polynomial $P(k) = k \left( k - 2 \right) \cdots \left( k - 2 \left( u - 1 \right) \right)$ we get that
\begin{align}
 &\sum_{k=2u}^{2r+1} \binom{2r+1}{k} \left(-1\right)^{k+1} k\left(k-2\right) \cdots \left( k - 2\left(u-1\right) \right) \notag\\
&\qquad \qquad \qquad \qquad= \sum_{k=0}^{2u-1} \binom{2r+1}{k} \left(-1\right)^k  k\left(k-2\right) \cdots \left( k - 2\left(u-1\right) \right)\notag \\
&\qquad \qquad \qquad \qquad= - \sum_{\ell=0}^{u-1} \binom{2r+1}{2\ell+1} \left(2\ell+1\right)\left(2\ell-1\right) \cdots \left( 2\ell+1 - 2 \left(u-1\right) \right) \notag \\
&\qquad \qquad \qquad \qquad= - \sum_{\ell=0}^{u-1} \binom{2r+1}{2\ell+1} \left(2\ell+1\right)!! \left( 2 \left( u - 1 - \ell \right) - 1 \right)!! \left( - 1 \right)^{u-1-\ell}. \label{eq:coeff_2a}
\end{align}
Thus we see that in the sum~\eqref{eq:sum1} the cofficient of the term involving $t^{2r+1+j}$ is zero, while the coefficient of the term involving $t^{2r+1+j-2u}$ for $1 \leq u \leq r$ is $2^{u+1} \exp \left( - t^2 / 4 \right)$ times the expression in~\eqref{eq:coeff_2a}. These are cancelled by terms coming from the sum in~\eqref{eq:sum2} as we will see shortly; to see this we need the asymptotic expansion of $\erfc$ to high enough order. In particular we have (see~\cite[equations~7.1.13 and~7.1.24]{abramowitz1964handbook}) that
\begin{equation}\label{eq:erfc_expansion}
  \sqrt{\pi} \erfc \left( t / 2 \right) = 2 \exp \left( - t^2 / 4 \right) \sum_{n=0}^{2r} \left( - 1 \right)^n 2^n \left( 2n - 1 \right)!! t^{-2n-1} + R \left( t \right),
\end{equation}
where the approximation error $R \left( t \right)$ satisfies
\[
 \left| R \left( t \right) \right| \leq 2^{2r+2} \left( 4r+1 \right)!!  t^{-(4r+3)} \exp \left( - t^2 / 4 \right).
\]
Plugging~\eqref{eq:erfc_expansion} back into~\eqref{eq:sum2}, we first see that the error term satisfies
\begin{equation}\label{eq:error}
 \left| R \left( t \right) \right| \sum_{\ell=0}^{r} \binom{2r+1}{2\ell+1} t^{2r+1+j-(2\ell+1)} 2^{\ell+1} \left( 2\ell+1\right)!! = O \left( t^{2j-1-(a+b)} \exp \left( - t^2 / 4 \right) \right)
\end{equation}
as $t \to \infty$. The main term of~\eqref{eq:sum2} becomes the sum
\[
 2 \exp \left( - t^2 / 4 \right) \sum_{\ell=0}^{r} \sum_{n=0}^{2r} \binom{2r+1}{2\ell+1} 2^{\ell+n+1} \left( 2 \ell + 1 \right)!! \left( 2n - 1 \right)!! \left( - 1 \right)^n t^{2r+1+j-2 \left( \ell + n + 1 \right)}.
\]
For $u$ such that $1 \leq u \leq r$, the coefficient of the term involving $t^{2r+1+j-2u}$ is $2^{u+1} \exp \left(-t^2/4\right)$ times
\[
 \sum_{\ell=0}^{u-1} \binom{2r+1}{2\ell+1} \left(2\ell+1\right)!! \left( 2 \left( u - 1 - \ell \right) - 1 \right)!! \left( - 1 \right)^{u-1-\ell},
\]
which cancels out the coefficient of the same term coming from the other sum~\eqref{eq:sum1}, see~\eqref{eq:coeff_2a}. 
For $u$ such that $r < u \leq 2r$, the coefficient of the term involving $t^{2r+1+j-2u}$ is $2^{u+1} \exp \left(-t^2/4\right)$ times 
\begin{multline*}
 \sum_{\ell=0}^{r} \binom{2r+1}{2\ell+1} \left(2\ell+1\right)!! \left( 2 \left( u - 1 - \ell \right) - 1 \right)!! \left( - 1 \right)^{u-1-\ell}\\
\begin{aligned}
&=  \sum_{\ell=0}^{r} \binom{2r+1}{2\ell+1} \left( 2 \ell + 1 \right) \left( 2 \ell -1 \right) \dots \left( \left( 2\ell+1 \right) - 2 \left( u - 1 \right) \right) \\
&= - \sum_{k=0}^{2r+1} \binom{2r+1}{k} \left(-1\right)^k k \left(k - 2 \right) \dots \left( k - 2 \left( u-1 \right) \right) = 0,
\end{aligned}
\end{multline*}
where we again used~\eqref{eq:alt_sum_zero}, together with the fact that $u \leq 2r$. Finally, the coefficient of the term involving $t^{2j+1-(a+b)}$ is $2^{2r+2} \exp \left(-t^2/4\right)$ times
\begin{multline*}
 \sum_{\ell=0}^{r} \binom{2r+1}{2\ell+1} \left(2\ell+1\right)!! \left( 2 \left( 2r - \ell \right) - 1 \right)!! \left( - 1 \right)^{2r-\ell} =  - \sum_{k=0}^{2r+1} \binom{2r+1}{k} \left(-1\right)^k k \left(k - 2 \right) \dots \left( k - 4r \right) \\
= - \sum_{k=0}^{2r+1} \binom{2r+1}{k} \left(-1\right)^k  k^{2r+1} = - \left( - 1\right)^{2r+1} \left(2r+1\right)! = \left(2r+1\right)!,
\end{multline*}
where we used~\eqref{eq:alt_sum_zero} in the second equality. 
Since all other terms are of lower order (see~\eqref{eq:error}), this concludes the proof.
\end{proof}

\begin{proof} \textbf{of Lemma~\ref{lem:that_we_need}} 
(a) If $U \neq T$, then $d = \sum_{i \in U} d_T \left( i \right) \in \left\{ 1, \dots, 2 \left| T \right| - 3 \right\}$. 
Similarly to the third paragraph in Section~\ref{sec:previous_arbitrary}, we can view the evolution of $\sum_{i\in U} d_{\PA \left(n,T\right)} \left( i \right)$ in the following way. 
When adding a new vertex, first decide whether it attaches to one of the initial $\left|T\right|$ vertices (with probability $\sum_{i=1}^{\left|T\right|} d_{\PA \left(n,T\right)} \left( i \right) / \left( 2n - 2 \right)$) or not (with the remaining probability); if it does, then independently pick one of them to attach to with probability proportional to their degree---a vertex in $U$ is chosen with probability $\sum_{i \in U} d_{\PA \left( n, T \right)} \left( i \right) / \sum_{i=1}^{\left|T\right|} d_{\PA \left(n,T\right)} \left( i \right)$. 
This implies the following representation: $\sum_{i \in U} D_i \left( T \right) \stackrel{d}{=} 2BZ$, where $B$ and $Z$ are independent, $B \sim \Beta \left( d, 2\left|T\right| - 2 - d \right)$, and $Z \sim \GGa \left( 2\left|T\right|-1, 2 \right)$. 
This also follows directly from the representation~\eqref{eq:rep}. 
    Thus~\eqref{eq:tail_seed} is a direct consequence of Lemma~\ref{lem:computation}.

If $U = T$, then $\sum_{i \in U} D_i \left( T \right) \stackrel{d}{=} 2Z$ where $Z \sim \GGa \left( 2 \left| T \right| - 1, 2 \right)$ (see Section~\ref{sec:previous_arbitrary}), and then~\eqref{eq:tail_seed} follows from a calculation that is contained in the proof of Lemma~\ref{lem:computation}.

(b) To show~\eqref{eq:tail_late} we use the results of~\cite{pekoz2013degree} as described in Section~\ref{sec:previous_edge}. In addition we use the following tail bound of~\cite[Lemma~2.6]{pekoz2013degree}, which says that for $x > 0$ and $s \geq 1$ we have $\int_x^{\infty} \kappa_s \left( y \right) dy \leq \frac{s}{x} \kappa_s \left( x \right)$. Consequently, for any $i > \left| T \right|$ we have the following tail bound:
\begin{align*}
 \P \left( D_i \left( T \right) > t \right) &= \P \left( W_{i-1} > \sqrt{\frac{i-1}{2}} t \right) = \int_{\sqrt{\frac{i-1}{2}} t}^{\infty} \kappa_{i-1} \left( y \right) dy \\
 &\leq \frac{\sqrt{2i-2}}{t} \kappa_{i-1} \left( \sqrt{\frac{i-1}{2}} t \right) = \frac{2}{\sqrt{\pi} t} \exp \left( - t^2 / 4 \right) \left( i-2 \right)! U \left( i-2, \frac{1}{2}, \frac{t^2}{4} \right).
\end{align*}
The following integral representation is useful for us~\cite[eq.~13.2.5]{abramowitz1964handbook}:
\[
 \Gamma \left( a \right) U \left( a, b, z \right) = \int_0^\infty e^{-zw} w^{a-1} \left( 1 + w \right)^{b-a-1} dw.
\]
Consequently, we have
\begin{align*}
 \sum_{i=3}^{\infty} \left( i - 2 \right)! U \left( i-2, \frac{1}{2}, \frac{t^2}{4} \right) &= \sum_{i=3}^\infty \left( i - 2 \right) \int_0^\infty e^{-\frac{t^2}{4} w} \frac{1}{w\sqrt{1+w}} \left( \frac{w}{1+w} \right)^{i-2} dw \\
 &= \int_0^\infty e^{-\frac{t^2}{4} w} \frac{1}{w \sqrt{1+w}} \sum_{i=3}^\infty \left( i - 2 \right) \left( \frac{w}{1+w} \right)^{i-2} dw \\
 &= \int_0^\infty e^{-\frac{t^2}{4} w}\frac{1}{w \sqrt{1+w}} w \left( 1 + w \right) dw \\
 &\leq \int_0^\infty e^{-\frac{t^2}{4} w} \left( 1 + w \right) dw = \frac{4}{t^2} + \frac{16}{t^4},
\end{align*}
which shows~\eqref{eq:tail_late} for $L=3$. Similarly, for $L \geq 4$ we have
\begin{align*}
 \sum_{i=L}^{\infty} \left( i - 2 \right)! U \left( i-2, \frac{1}{2}, \frac{t^2}{4} \right) &= \int_0^\infty e^{-\frac{t^2}{4} w} \frac{1}{w \sqrt{1+w}} \sum_{i=L}^\infty \left( i - 2 \right) \left( \frac{w}{1+w} \right)^{i-2} dw \\
 &= \int_0^\infty e^{-\frac{t^2}{4} w} \frac{1}{w \sqrt{1+w}} \frac{\left( L - 2 \right) \left( \frac{w}{1+w} \right)^{L-2} + \left( 3- L \right) \left( \frac{w}{1+w} \right)^{L-1}}{1/ \left( 1 + w \right)^2} dw \\
 &\leq \int_0^\infty e^{-\frac{t^2}{4} w} \left( L - 2 \right) \left( \frac{w}{1+w} \right)^{L-3} \sqrt{1+w} dw \\
 &\leq \int_0^\infty e^{-\frac{t^2}{4} w} \left( L - 2 \right) w^{L-3} dw = \frac{4^{L-2} \times \left( L -2 \right)!}{t^{2L-4}},
\end{align*}
where the first inequality follows from dropping the nonpositive term $\left( 3- L \right) \left( \frac{w}{1+w} \right)^{L-1}$, and the second one follows because $L \geq 4$. This shows~\eqref{eq:tail_late} for $L \geq 4$ and thus concludes the proof.
\end{proof}

\section{Open problems} \label{sec:op}
\begin{enumerate}
\item The main open question at this stage is whether Conjecture~\ref{conj:main} holds true. 
Our results leave open the case 
$\vec{d}(S) = \vec{d}(T)$.
%when $\left|S\right| = \left|T \right|$, $\Delta \left( S \right) = %\Delta \left( T \right)$, and $m_S = m_T$. 
%In Section~\ref{sec:candidate} we noted that our current approach might be pushed further in order to distinguish between trees with different degree counts, and this might be interesting in itself. 
We described a way to approach Conjecture~\ref{conj:main} in general, and showed that it would follow from a technical result which we stated as Conjecture~\ref{conj:technical}.
\item This paper is essentially about the {\em testing} version of the problem. Can anything be said about the {\em estimation} version? Perhaps a first step would be to understand the multiple hypothesis testing problem where one is interested in testing whether the seed belongs to the family of trees $\cT_1$ or to the family $\cT_2$.
\item Starting from two seeds $S$ and $T$ with different spectrum, is it always possible to distinguish (with non-trivial probability) between $\PA(n,S)$ and $\PA(n,T)$ with spectral techniques? More generally, it would be interesting to understand what properties are invariant under modifications of the seed.
\item Is it possible to give a combinatorial description of the (pseudo)metric $\delta$?
\item Under what conditions on two tree sequences $(T_k)$, $(R_k)$ do we have $\lim_{k \to \infty} \delta(T_k, R_k) = 1$? In Theorem \ref{th:arbstars} we showed that a sufficient condition is to have $T_k = T$ and $R_k = S_k$. This can easily be extended to the condition that $\Delta(T_k)$ remains bounded while $\Delta(R_k)$ tends to infinity. 
If $T_k$ and $R_k$ are independent (uniformly) random trees on $k$ vertices, do we have $\lim_{k \to \infty} \E \delta(T_k, R_k) = 1$?
\item What can be said about the general preferential attachment model, when multiple edges or vertices are added at each step?
\item A simple variant on the model studied in this paper is to consider probabilities of connection proportional to the degree of the vertex raised to some power $\alpha$. 
For $\alpha = 1$ we conjectured and in some cases showed that different seeds are distinguishable. On the contrary, it seems reasonable to expect that for $\alpha = 0$ (uniform attachment) all seeds are indistinguishable from each other asymptotically. What about for $\alpha \in (0,1)$?
\end{enumerate}

\subsection*{Acknowledgements}
The first author thanks Nati Linial for initial discussions on this problem. 
We also thank Remco van der Hofstad and Nathan Ross for helpful discussions and valuable pointers to the literature. 
The research described here was carried out at the Simons Institute for the Theory of Computing. We are grateful to the Simons Institute for offering us such a wonderful research environment. 
This work is supported by NSF grants DMS 1106999 and CCF 1320105 (E.M.), and by DOD ONR grant N000141110140 (E.M., M.Z.R.).

\bibliographystyle{plainnat}
\bibliography{bib}
\end{document}